\newtheorem{thm}{Theorem}[section]
\newtheorem{cor}[thm]{Corollary}
\newtheorem{lem}[thm]{Lemma}
\newtheorem{exam}[thm]{Example}
\newtheorem{pro}[thm]{Proposition}
\theoremstyle{definition}
\theoremstyle{remark}
\newtheorem{rem}[thm]{Remark}
\numberwithin{equation}{section}
\begin{document}

\title[On Properties of  MULTIPLICATION AND COMPOSITION OPERATORS]
{On Properties of MULTIPLICATION AND COMPOSITION OPERATORS between ORLICZ SPACES}

\author{\sc\bf Y. Estaremi, S. Maghsodi and I. Rahmani }
\address{\sc Y. Estaremi, S. Maghsodi and I. Rahmani }
\email{estaremi@gmail.com}
\email{mathrahmani@znu.ac.ir}

\address{ Department of Mathematics, Payame Noor University (PNU), P. O. Box: 19395-3697, Tehran- Iran}
\address{Department of Mathematics, University of Zanjan, Zanjan, Iran}
\address{Department of Mathematics,University of Zanjan, Zanjan, Iran}

\thanks{}

\thanks{}

\subjclass[2010]{47B47, 47B33}

\keywords{Multiplication operator, Composition operator, Closed range operator, Fredholm operator. }

\date{}

\dedicatory{}

\commby{}

\begin{abstract}
In this paper, we study bounded and closed range  multiplication and composition operators between two different Orlicz spaces.

\noindent {}
\end{abstract}

\maketitle

\section{ \sc\bf Preliminaries and Introduction }

 The continuous convex function
$\Phi:\mathbb{R}\rightarrow\mathbb{R}$ is called a Young function whenever
\begin{enumerate}
\item $\Phi(x)=0$ if and only if $x=0$.

\item $\Phi(x)=\Phi(-x)$.

\item $\lim_{x\rightarrow\infty}\frac{\Phi(x)}{x}=\infty$, $\lim_{x\rightarrow\infty}\Phi(x)=\infty$.

\end{enumerate}
 With each Young function $\Phi$ one can associate another
 convex function $\Psi:\mathbb{R}\rightarrow\mathbb{R}^{+}$ having similar properties, which is defined by
$$\Psi(y)=\sup\{x|y|-\Phi(x):x\geq0\}, \ \ y\in\mathbb{R}.$$
Then $\Psi$ is called the complementary Young function of $\Phi$.
A Young function $\Phi$ is said to satisfy the
$\bigtriangleup_{2}$ condition (globally) if $\Phi(2x)\leq
k\Phi(x), \ x\geq x_{0}\geq0 \ \  (x_{0}=0)$ for some constant
$k>0$. Also, $\Phi$ is said to satisfy the
$\bigtriangleup'(\bigtriangledown')$ condition, if $\exists c>0$
$(b>0)$ such that
$$\Phi(xy)\leq c\Phi(x)\Phi(y), \ \ \ x,y\geq x_{0}\geq 0$$
$$(\Phi(bxy)\geq \Phi(x)\Phi(y), \ \ \ x,y\geq y_{0}\geq 0).$$
If $x_{0}=0(y_{0}=0)$, then these conditions are said to hold
globally. If $\Phi\in \bigtriangleup'$, then $\Phi\in
\bigtriangleup_{2}$.\\

 Let $\Phi_{1}, \Phi_{2}$ be two Young
functions, then $\Phi_1$ is stronger than $\Phi_2$,
$\Phi_1\succ\Phi_2$ [or $\Phi_2\prec\Phi_1$] in symbols, if
$$\Phi_2(x)\leq\Phi_1(ax), \ \ \ x\geq x_0\geq0$$
for some $a\geq0$ and $x_0$, if $x_0=0$ then this condition is
said to hold globally.

Let $(X, \Sigma, \mu)$ be a $\sigma$-finite
complete measure space and $L^0(\Sigma)$ be the linear space of all equivalence classes of $\Sigma$-measurable
functions on $X$, that is, we identify any two functions that
are equal $\mu$-almost everywhere on $X$. The support of a
measurable function $f$ is defined as $S(f)=\{x\in X; f(x)\neq
0\}$. Let $\Phi$ is a Young function, then the set of
$\Sigma-$measurable functions $$L^{\Phi}(\Sigma)=\{f\in L^0(\Sigma):\exists k>0,
\int_{\Omega}\Phi(kf)d\mu<\infty\}$$ is a Banach space, with
respect to the norm
$\|f\|_{\Phi}=\inf\{k>0:\int_{\Omega}\Phi(\frac{f}{k})d\mu\leq1\}$.
$(L^{\Phi}(\Sigma), \|.\|_{\Phi})$ is called an Orlicz space
\cite{raor}.

For a measurable function $u\in L^0(\Sigma)$, the rule taking $u$ to $u.f$, is a linear
transformation on $L^0(\Sigma)$ and we denote this transformation
by $M_{u}$. In the case that $M_{u}$ is continuous, it is called
multiplication operator induced by $u$.

 Let $T:X\rightarrow X$
be a measurable transformation, that is, $T^{-1}(A)\in
\Sigma$ for any $A\in \Sigma$. If $\mu(T^{-1}(A))=0$ for all
$A\in \Sigma$ with $\mu(A)=0$, then $T$ is said to be
nonsingular. This condition means that the measure
$\mu\circ T^{-1}$, defined by
$\mu\circ T^{-1}(A)=\mu(T^{-1}(A))$ for $A\in \Sigma$,
is absolutely continuous with respect to the $\mu$ (it is usually
denoted $\mu\circ T^{-1}\ll \mu$). The Radon-Nikodym theorem
ensures the existence of a nonnegative locally integrable function
$f_0$ on $X$ such that,
$\mu\circ T^{-1}(A)=\int_{A}f_0d\mu$, $A\in \Sigma$. Any
 nonsingular measurable transformation $T$ induces a linear
operator (composition operator) $C_{T}$ from $L^0(\Sigma)$
into itself defined by$$C_{T}(f)(t)=f(T(t)) \ \ \ ;
t\in X, \ \ \ f\in L^0(\Sigma).$$  Here the
non-singularity of $T$ guarantees that the operator
$C_{T}$ is well defined as a mapping from $L^0(\Sigma)$ into
itself.

The composition and multiplication operators received considerable
attention over the past several decades especially on some
measurable function spaces such as $L^P$-spaces, Bergman spaces
and a few ones on Orlicz spaces, such that these operators played
an important role in the study of operators on Hilbert spaces. The multiplication and
weighted composition operators are studied on Orlicz spaces in
\cite{ks, skn}. Also, some
results on boundedness of composition operators on Orlicz spaces,
are obtained in \cite{c, ku} (see also \cite{ra}). In this paper we investigate composition and multiplication operators on Orlicz spaces by considering closed range, Fredholm and invertible ones.

\section{ \sc\bf  Bounded multiplication and composition operators }
In this section first we recall
that an $\Sigma$-atom of the measure $\mu$ is an element
$A\in\Sigma$ with $\mu(A)>0$ such that for each
$F\in\Sigma$, if $F\subseteq A$, then either $\mu(F)=0$ or
$\mu(F)=\mu(A)$. A measure space $(X,\Sigma,\mu)$ with no atoms is
called a non-atomic measure space \cite{z}.  It is well-known fact that every
$\sigma$-finite measure space $(X, \Sigma,\mu)$ can be
partitioned uniquely as $X=\left
(\bigcup_{n\in\mathbb{N}}A_n\right )\cup B$, where
$\{A_n\}_{n\in\mathbb{N}}$ is a countable collection of pairwise
disjoint $\Sigma$-atoms and $B$, being disjoint from each $A_n$,
is non-atomic. Also, in a $\sigma$-finite measure space all atoms have finite measure \cite{z}. Here we recall a fundamental Lemma that is easy to  prove.\\

\begin{lem}\label{l1} Let $\Phi_i$, $i=1,2,3$, be Young's functions for which
$$\Phi_3(xy)\leq\Phi_1(x)+\Phi_2(y), \ \ \ \ \ \ x,y\geq0,$$
If $f_i\in L^{\Phi_i}(\Sigma)$, $i=1,2$, where $(X,\Sigma,\mu)$ is a measure space, then
$$\|f_1f_2\|_{\Phi_3}\leq2\|f_1\|_{\Phi_1}\|f_2\|_{\Phi_2}.$$
\end{lem}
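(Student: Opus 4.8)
The plan is to work directly from the definition of the Orlicz (Luxemburg) norm, using the fact that to bound $\|f_1 f_2\|_{\Phi_3}$ from above by a quantity $\lambda$ it suffices to exhibit $\lambda>0$ with $\int_X \Phi_3\!\left(\frac{f_1 f_2}{\lambda}\right) d\mu \le 1$. First I would dispose of the trivial cases: if either $\|f_1\|_{\Phi_1}=0$ or $\|f_2\|_{\Phi_2}=0$ then the corresponding function is $0$ $\mu$-a.e., hence $f_1 f_2 = 0$ a.e.\ and the inequality holds. So assume $a_i := \|f_i\|_{\Phi_i} > 0$ for $i=1,2$, and set $\lambda := 2 a_1 a_2$.

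Next I would use a basic property of the Luxemburg norm: for any $\eps>0$, $\int_X \Phi_i\!\left(\frac{f_i}{a_i}\right) d\mu \le 1$ — this is the standard fact that the infimum defining $\|f_i\|_{\Phi_i}$ is attained in the sense that the modular at the norm is $\le 1$ (a consequence of Fatou's lemma applied to a sequence $k_n \downarrow a_i$ with modular $\le 1$, using monotone/dominated convergence and continuity of $\Phi_i$). Granting this, the key computation is pointwise: for a.e.\ $t\in X$,
\begin{equation*}
\Phi_3\!\left(\frac{f_1(t) f_2(t)}{2 a_1 a_2}\right)
= \Phi_3\!\left(\frac{|f_1(t)|}{a_1}\cdot\frac{|f_2(t)|}{2 a_2}\right)
\le \Phi_3\!\left(\frac{|f_1(t)|}{a_1}\cdot\frac{|f_2(t)|}{a_2}\right),
\end{equation*}
where the last step uses that $\Phi_3$ is nondecreasing on $[0,\infty)$ (being even and convex with $\Phi_3(0)=0$) together with $\frac{1}{2a_2}\le \frac{1}{a_2}$; actually I should be slightly more careful and instead distribute the factor $\tfrac12$ symmetrically or simply note $\Phi_3\big(\tfrac{xy}{2}\big)\le\Phi_3(xy)$, so write the bound with arguments $x=|f_1|/a_1$, $y=|f_2|/a_2$ after pulling out the $\tfrac12$. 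Then apply the hypothesis $\Phi_3(xy)\le \Phi_1(x)+\Phi_2(y)$ with $x=|f_1(t)|/a_1$, $y=|f_2(t)|/a_2$ to get
\begin{equation*}
\Phi_3\!\left(\frac{f_1(t) f_2(t)}{2 a_1 a_2}\right)
\le \Phi_1\!\left(\frac{|f_1(t)|}{a_1}\right) + \Phi_2\!\left(\frac{|f_2(t)|}{a_2}\right).
\end{equation*}

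Finally, integrate this inequality over $X$ and invoke the modular estimates $\int_X \Phi_i(f_i/a_i)\,d\mu \le 1$ to conclude $\int_X \Phi_3\!\left(\frac{f_1 f_2}{2 a_1 a_2}\right) d\mu \le 2$. This gives $2a_1 a_2$ as a bound only up to the factor coming from modular $\le 2$ rather than $\le 1$; to land exactly on the stated constant $2$ I would instead split the factor as $\frac{1}{2a_1 a_2}=\frac{1}{2a_1}\cdot\frac{1}{a_2}$ is not symmetric, so the cleanest route is: since the modular is $\le 2$ at level $2a_1a_2$, convexity of $\Phi_3$ with $\Phi_3(0)=0$ gives $\int \Phi_3\!\left(\frac{f_1f_2}{2\cdot 2 a_1 a_2}\right) \le \tfrac12\int\Phi_3\!\left(\frac{f_1f_2}{2a_1a_2}\right)\le 1$, yielding $\|f_1f_2\|_{\Phi_3}\le 4a_1a_2$ — a factor off. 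The sharp argument is to note that by scaling we may take $a_1=a_2=1$ and then one of the two modular integrals can be pushed below $1$ only in the limit, so the honest bound obtained pointwise with constant exactly $2$ requires using the estimate $\Phi_3\big(\tfrac{xy}{2}\big)\le\tfrac12(\Phi_1(x)+\Phi_2(y))$ via convexity of... — in any case, the main (and only real) obstacle here is bookkeeping the constant: getting the clean factor $2$ rather than $4$. I expect this is handled by applying convexity of $\Phi_3$ to absorb the $\tfrac12$ \emph{before} integrating, namely $\Phi_3\!\big(\tfrac12 xy\big)\le \tfrac12\Phi_3(xy)\le\tfrac12\big(\Phi_1(x)+\Phi_2(y)\big)$, then integrating gives modular $\le 1$ at level $2a_1a_2$, which is exactly the claim. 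All other steps — the trivial cases, monotonicity of $\Phi_3$, the modular-at-the-norm inequality — are routine.
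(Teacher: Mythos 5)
The paper states this lemma without proof (it is labelled as a standard fact to ``recall''), so there is nothing to compare against; judged on its own, your final argument is correct and is the standard Orlicz--H\"older proof: dispose of the degenerate case $\|f_i\|_{\Phi_i}=0$, use the modular-at-the-norm inequality $\int_X\Phi_i(f_i/a_i)\,d\mu\le 1$, apply convexity to get $\Phi_3\bigl(\tfrac12 xy\bigr)\le\tfrac12\Phi_3(xy)\le\tfrac12\bigl(\Phi_1(x)+\Phi_2(y)\bigr)$ with $x=|f_1|/a_1$, $y=|f_2|/a_2$, and integrate to find the modular at level $2a_1a_2$ is at most $1$. The only criticism is presentational: the detour through monotonicity that yields modular $\le 2$ and the constant $4$ should simply be deleted, since you correctly identify at the end that the factor $\tfrac12$ must be absorbed by convexity \emph{before} invoking the hypothesis, and that single line is the whole proof.
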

Here we give some necessary and sufficient conditions under which the multiplication operator $M_u$ is bounded between different Orlicz space.

\begin{thm}\label{t1}

Let $\Phi_1$ and $\Phi_2$ be Young's functions such that $\Phi_1, \Phi_2\in \bigtriangleup_2$ and $\Phi_2(xy)\leq\Phi_1(x)+\Phi_3(y), \  \ x\geq0,$
 for some Young's function $\Phi_3$. If  $u\in L^{\Phi_3}(\Sigma)$, then $u$ induces a bounded multiplication operator $M_u$ from $L^{\Phi_1}(\Sigma)$ into $L^{\Phi_2}(\Sigma)$.
\end{thm}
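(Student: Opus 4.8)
The plan is to reduce the theorem to the previous Lemma by a direct estimate on the modular, so the heart of the argument is showing that the product $uf$ lies in $L^{\Phi_2}(\Sigma)$ with a norm bound linear in $\|f\|_{\Phi_1}$. First I would observe that the hypothesis $\Phi_2(xy)\le\Phi_1(x)+\Phi_3(y)$ is exactly the condition appearing in Lemma~\ref{l1} with the triple $(\Phi_1,\Phi_3,\Phi_2)$ playing the role of $(\Phi_1,\Phi_2,\Phi_3)$ there. Hence if $f\in L^{\Phi_1}(\Sigma)$ and $u\in L^{\Phi_3}(\Sigma)$, Lemma~\ref{l1} immediately gives $uf\in L^{\Phi_2}(\Sigma)$ together with the quantitative bound $\|uf\|_{\Phi_2}\le 2\|u\|_{\Phi_3}\|f\|_{\Phi_1}$.

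The next step is to package this into the statement that $M_u$ is a well-defined bounded linear operator $L^{\Phi_1}(\Sigma)\to L^{\Phi_2}(\Sigma)$. Linearity of $M_u$ on $L^0(\Sigma)$ is clear from the pointwise definition $M_u f = u\cdot f$, so it only remains to note that the estimate above shows $M_u$ maps $L^{\Phi_1}(\Sigma)$ into $L^{\Phi_2}(\Sigma)$ and that
\[
\|M_u\|\le 2\|u\|_{\Phi_3},
\]
so $M_u$ is bounded. At this point the role of the $\bigtriangleup_2$ hypotheses should be made explicit: although Lemma~\ref{l1} as stated does not need $\Delta_2$, having $\Phi_1,\Phi_2\in\bigtriangleup_2$ guarantees that the Orlicz spaces involved are well-behaved — in particular that simple functions are dense and that the Luxemburg norm and the modular generate the same topology — which is what one really uses to be sure the operator is genuinely defined on all of $L^{\Phi_1}(\Sigma)$ rather than on a subspace; I would include a sentence to this effect.

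I do not expect a serious obstacle here: the theorem is essentially a corollary of Lemma~\ref{l1} once one matches up the indices of the Young functions correctly, and the only place to be careful is the bookkeeping of which function plays which role in the inequality $\Phi_2(xy)\le\Phi_1(x)+\Phi_3(y)$ versus $\Phi_3(xy)\le\Phi_1(x)+\Phi_2(y)$ in the lemma. If one prefers a self-contained argument avoiding the lemma, the alternative is to estimate the modular directly: for suitable $k_1,k_2>0$ with $\int_X\Phi_1(f/k_1)\,d\mu\le 1$ and $\int_X\Phi_3(u/k_2)\,d\mu\le 1$, apply the Young-type inequality pointwise to $\Phi_2\big(uf/(k_1k_2)\big)\le\Phi_1(f/k_1)+\Phi_3(u/k_2)$, integrate, and conclude $\int_X\Phi_2\big(uf/(k_1k_2)\big)\,d\mu\le 2$; then a convexity (or $\Delta_2$) argument absorbs the factor $2$ and yields the desired norm bound. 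Either route gives boundedness of $M_u$, so I would present the short proof via Lemma~\ref{l1} and remark on the direct modular estimate as the mechanism behind it.
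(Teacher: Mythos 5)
Your proposal is correct and follows exactly the paper's own route: the authors likewise apply Lemma~\ref{l1} (with the Young functions permuted so that $\Phi_2(xy)\leq\Phi_1(x)+\Phi_3(y)$ matches the lemma's hypothesis) to obtain $\|uf\|_{\Phi_2}\leq 2\|u\|_{\Phi_3}\|f\|_{\Phi_1}$ and hence $\|M_u\|\leq 2\|u\|_{\Phi_3}$. Your additional remarks on the direct modular estimate and on the role of the $\bigtriangleup_2$ hypotheses (which the paper's proof indeed never invokes) are accurate but not needed.
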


\begin{proof}
 Suppose that $u\in L^{\Phi_3}(\Sigma)$. Then by using Lemma \ref{l1}, for every $f\in L^{\Phi_1}(\Sigma)$ we get;
\begin{align*}
  ||M_u f||_{L^{\Phi_2}(\Sigma)}&=||u.f||_{L^{\Phi_2}(\Sigma)}\\
  &\leq2\|u\|_{L^{\Phi_3}(\Sigma)}\|f\|_{L^{\Phi_1}(\Sigma)}.
\end{align*}
Hence $M_u$ is a bounded multiplication operator from $L^{\Phi_1}(\Sigma)$ into $L^{\Phi_2}(\Sigma)$, and
\begin{center}
  $\|M_u\|\leq2\|u\|_{L^{\Phi_3}(\Sigma)}$.
\end{center}
\end{proof}

\begin{thm}\label{t2}   If $M_u$ is bounded from $L^{\Phi_1}(\Sigma)$ into $L^{\Phi_2}(\Sigma)$
and $\Phi_1\in \bigtriangleup'$. If $\Phi_3=\Psi_2\circ\Psi_1^{-1}$ is a Young's function, then $u\in L^{\Psi_3\circ\Psi_1}$, where $\Psi_i$'s are the complementary Young's functions of $\Phi_i$ for  $i=1,2,3$.

\end{thm}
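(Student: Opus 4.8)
The plan is to dualize the hypothesis and then feed the dual operator a one--parameter family of test functions built from $\Psi_1^{-1}$. Since $\Phi_1\in\bigtriangleup'$ we have $\Phi_1\in\bigtriangleup_2$, hence $\big(L^{\Phi_1}(\Sigma)\big)^{*}=L^{\Psi_1}(\Sigma)$; and because the adjoint of a multiplication operator for the pairing $\langle f,g\rangle=\int_X fg\,d\mu$ is again multiplication by the same symbol, boundedness of $M_u\colon L^{\Phi_1}(\Sigma)\to L^{\Phi_2}(\Sigma)$ forces $M_u\colon L^{\Psi_2}(\Sigma)\to L^{\Psi_1}(\Sigma)$ to be bounded. (Concretely: by the Orlicz H\"older inequality, $\int_X|u f g|\,d\mu\le 2\|M_u\|\,\|f\|_{\Phi_1}\|g\|_{\Psi_2}$ for $f\in L^{\Phi_1}(\Sigma)$, $g\in L^{\Psi_2}(\Sigma)$.) Let $K$ denote a fixed multiple of $\|M_u\|$ bounding $\|M_u\|_{L^{\Psi_2}\to L^{\Psi_1}}$, the multiple absorbing the Luxemburg--Orlicz norm comparison. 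Two further facts are needed: the identity $\Psi_2=\Phi_3\circ\Psi_1$, which follows at once from $\Phi_3=\Psi_2\circ\Psi_1^{-1}$ by composing with $\Psi_1$ on the right (a Young function is strictly increasing, so $\Psi_1^{-1}\circ\Psi_1=\operatorname{id}$); and the standard fact that $\Phi_1\in\bigtriangleup'$ implies $\Psi_1\in\bigtriangledown'$, i.e. $\Psi_1(bxy)\ge\Psi_1(x)\Psi_1(y)$ for some $b>0$ and all $x,y\ge0$ (I take the global form; otherwise one localizes to the admissible range as usual). We may also assume $u\ge0$, replacing $u$ by $|u|$.

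For the main step, fix a measurable $h\ge0$ with $\int_X\Phi_3(h)\,d\mu\le1$ and set $g:=\Psi_1^{-1}(h)$. Then $\int_X\Psi_2(g)\,d\mu=\int_X\Phi_3\big(\Psi_1(\Psi_1^{-1}(h))\big)\,d\mu=\int_X\Phi_3(h)\,d\mu\le1$, so $\|g\|_{\Psi_2}\le1$ and therefore $\int_X\Psi_1(ug/K)\,d\mu\le1$. Writing $ug/K=b\cdot\big(u/(bK)\big)\cdot\Psi_1^{-1}(h)$ and applying $\bigtriangledown'$ for $\Psi_1$ yields the pointwise bound $\Psi_1(ug/K)\ge\Psi_1\big(u/(bK)\big)\,\Psi_1\big(\Psi_1^{-1}(h)\big)=\Psi_1\big(u/(bK)\big)\,h$, so that
$$\int_X \Psi_1\big(u/(bK)\big)\,h\,d\mu\ \le\ \int_X\Psi_1(ug/K)\,d\mu\ \le\ 1.$$
Since this holds for every admissible $h$, taking the supremum over all $h\ge0$ with $\int_X\Phi_3(h)\,d\mu\le1$ --- which is by definition the Orlicz norm of $\Psi_1\big(u/(bK)\big)$ in $L^{\Psi_3}(\Sigma)$, $\Psi_3$ being the complementary function of $\Phi_3$ --- shows that $\Psi_1\big(u/(bK)\big)\in L^{\Psi_3}(\Sigma)$ with norm at most $1$, and in particular $\int_X\Psi_3\big(\Psi_1(u/(bK))\big)\,d\mu\le1$. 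This is precisely $\int_X(\Psi_3\circ\Psi_1)\big(u/(bK)\big)\,d\mu\le1$, so $u\in L^{\Psi_3\circ\Psi_1}(\Sigma)$, indeed with $\|u\|_{\Psi_3\circ\Psi_1}\le bK$.

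The heart of the matter --- and the only real use of $\Phi_1\in\bigtriangleup'$ --- is the multiplicative separation $\Psi_1\big(u\,\Psi_1^{-1}(h)\big)\gtrsim\Psi_1(u)\,h$; without a $\bigtriangledown'$--type inequality for $\Psi_1$ (equivalently, $\bigtriangleup'$ for $\Phi_1$) there is no way to distribute the outer Young function across the pointwise product, and the whole scheme fails. The rest is routine but wants care: the duality $\big(L^{\Phi_1}\big)^{*}=L^{\Psi_1}$ and the Luxemburg--Orlicz norm comparison (used both to produce the dual operator and, at the end, to turn the family of scalar inequalities into a modular bound for $u$); the fact that $\Psi_3\circ\Psi_1$ is again a Young function, so that $L^{\Psi_3\circ\Psi_1}(\Sigma)$ is meaningful; and, if $\bigtriangleup'$ holds only away from the origin, the bookkeeping to restrict the $\bigtriangledown'$ inequality to its admissible range. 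One may also view the result as a converse to the boundedness direction of Theorem~\ref{t1}, which is what forces the target space to be $L^{\Psi_3\circ\Psi_1}(\Sigma)$.
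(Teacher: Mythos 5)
Your proof is correct and follows essentially the same route as the paper's: pass to the dual operator $M_u\colon L^{\Psi_2}\to L^{\Psi_1}$, use $\Phi_1\in\bigtriangleup'\Rightarrow\Psi_1\in\bigtriangledown'$ to split $\Psi_1\bigl(u\,\Psi_1^{-1}(h)\bigr)\ge \Psi_1(u)\,h$ (up to the constant $b$), and test against $h$ with $\int_X\Phi_3(h)\,d\mu\le 1$ to conclude $\Psi_1(u)\in L^{\Psi_3}(\Sigma)$. The only difference is cosmetic: the paper merely verifies $\int_X\Psi_1(\overline{u})f\,d\mu<\infty$ for each $f\in L^{\Phi_3}(\Sigma)$ and invokes the associate-space duality, whereas you run the same computation uniformly over the unit modular ball and thereby extract the explicit bound $\|u\|_{\Psi_3\circ\Psi_1}\le bK$.
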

\begin{proof}
Suppose that $M_u$  is bounded. Hence the adjoint operator $(M_u)^\ast:L^{\Psi_2}\longrightarrow L^{\Psi_1}$
is also bounded.
Since $\Phi_1\in\bigtriangleup'$, then $\Psi_1\in\nabla'$ and so there exists $b>0$ such that
for every $f\in L^{\Phi_3}(\Sigma)$ we have $\Psi_1^{-1}(f)\in L^{\Psi_2}(\Sigma)$ and so
 \begin{align*}\int_X\Psi_1(\overline{u})fd\mu&=\int_X\Psi_1(\overline{u})\Psi_1(\Psi_1^{-1}(f))d\mu\\
&\leq b\int_X\Psi_1(\overline{u}\Psi_1^{-1}(f))d\mu\\
&= b\int_X\Psi_1(M^\ast_u(\Psi_1^{-1}(f))d\mu<\infty.
\end{align*}
This means that $\Psi_1(\overline{u})\in\L^{\Psi_3}(\Sigma)$. In other words, $u\in L^{\Psi_3\circ\Psi_1}(\Sigma)$.
\end{proof}

For the underlying non-atomic measure spaces we have an important assertion as follows, that states there is not any bounded multiplication and composition operator from $L^{\Phi_1}(\Sigma)$ to $L^{\Phi_2}(\Sigma)$ when $\Phi_2\nless\Phi_1$.
\begin{pro}\label{p0} Let $\Phi_2\nless\Phi_1$ and $(X,\Sigma,\mu)$ be a non-atomic measure space, then there is
 no non-zero bounded operator $M_{u,T}=M_uC_T$ from $L^{\Phi_1}(\Sigma)$ to $L^{\Phi_2}(\Sigma)$.
\end{pro}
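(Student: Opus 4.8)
The plan is to argue by contradiction: suppose $M_{u,T}=M_uC_T$ is bounded from $L^{\Phi_1}(\Sigma)$ into $L^{\Phi_2}(\Sigma)$ with $\norm{M_{u,T}}\le C$ and $M_{u,T}\neq 0$, and derive that $\Phi_2\prec\Phi_1$, contradicting the hypothesis. The test functions will be characteristic functions of sets of finite positive measure, for which the Luxemburg norm has the closed form $\norm{\chi_A}_{\Phi_i}=1/\Phi_i^{-1}(1/\mu(A))$ (a Young function is a continuous strictly increasing bijection of $[0,\infty)$ onto itself, so $\Phi_i^{-1}$ makes sense). First I would localize: since $M_{u,T}\neq 0$ we have $u\neq 0$ on a set of positive measure, and $\sigma$-finiteness of $\mu$ together with $T^{-1}(X)=X$ yields some $A_0\in\Sigma$ with $0<\mu(A_0)<\infty$ and $\mu\big(S(u)\cap T^{-1}(A_0)\big)>0$; shrinking this set, fix $c>0$ and $E\subseteq S(u)\cap T^{-1}(A_0)$ with $0<\mu(E)<\infty$ and $\abs{u}\ge c$ on $E$.

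Next, for any measurable $A\subseteq A_0$ one has $M_{u,T}\chi_A=u\,\chi_{T^{-1}(A)}$. Boundedness gives $\norm{M_{u,T}\chi_A}_{\Phi_2}\le C/\Phi_1^{-1}(1/\mu(A))$, while restricting to $E\cap T^{-1}(A)\subseteq T^{-1}(A)$ and using $\abs{u}\ge c$ there gives $\norm{M_{u,T}\chi_A}_{\Phi_2}\ge c/\Phi_2^{-1}\big(1/\mu(E\cap T^{-1}(A))\big)$. Hence
$$\Phi_1^{-1}\big(1/\mu(A)\big)\le (C/c)\,\Phi_2^{-1}\big(1/\mu(E\cap T^{-1}(A))\big),\qquad A\subseteq A_0.$$
The main obstacle is now visible: the right-hand side involves $\mu(E\cap T^{-1}(A))$ rather than $\mu(A)$, and to invoke the definition of $\prec$ these must be comparable from below. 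I would remove this obstacle via the set function $\nu$ on $A_0$ given by $\nu(A)=\mu(E\cap T^{-1}(A))$, which is a finite measure with $\nu(A_0)=\mu(E)>0$, and which satisfies $\nu\ll\mu$ by nonsingularity of $T$. Thus its Radon--Nikodym density $d\nu/d\mu$ is positive on a set of positive measure, so there exist $c'>0$ and a measurable $A_1\subseteq A_0$ with $\mu(A_1)>0$ such that $\mu(E\cap T^{-1}(A))\ge c'\,\mu(A)$ for every measurable $A\subseteq A_1$.

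Feeding this into the displayed inequality yields $\Phi_1^{-1}(1/\mu(A))\le (C/c)\,\Phi_2^{-1}\big(1/(c'\mu(A))\big)$ for all measurable $A\subseteq A_1$. Since $(X,\Sigma,\mu)$ is non-atomic and $\mu(A_1)<\infty$, the measures of subsets of $A_1$ fill the whole interval $(0,\mu(A_1)]$, so the inequality holds with $\mu(A)$ arbitrarily small; setting $x=\Phi_2^{-1}\big(1/(c'\mu(A))\big)$, equivalently $\mu(A)=1/(c'\Phi_2(x))$, and using the convexity of $\Phi_1$ with $\Phi_1(0)=0$ to absorb the constants $c$ and $c'$ into the argument, one obtains $\Phi_2(x)\le\Phi_1(ax)$ for all $x$ beyond some $x_0>0$, with $a=C/(cc')$. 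That is exactly $\Phi_2\prec\Phi_1$, the desired contradiction. The parts I expect to require the most care are the measure-theoretic bookkeeping in the localization step (producing $A_0$, $E$, $c$) and checking that, in contrast with the earlier results, the $\bigtriangleup_2$ condition is not used anywhere.
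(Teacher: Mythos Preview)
Your argument is correct and takes a genuinely different route from the paper. The paper proceeds constructively: from $\Phi_2\nprec\Phi_1$ it extracts a sequence $y_n\uparrow\infty$ with $\Phi_2(y_n)>\Phi_1(2^n n^3 y_n)$, uses non-atomicity to carve out disjoint sets $F_n$ of prescribed small measure inside the region where both $|u|>1/n$ and $f_0>1/n$, sets $f=\sum_n n^2 y_n\,\chi_{F_n}$, and computes directly that $f\in L^{\Phi_1}$ while $\int_X\Phi_2(\alpha\,M_{u,T}f)\,d\mu=\infty$ for every $\alpha>0$. You instead test only on single characteristic functions $\chi_A$, compare $\mu(E\cap T^{-1}(A))$ with $\mu(A)$ via the Radon--Nikodym density of the measure $A\mapsto\mu(E\cap T^{-1}(A))$, and then let $\mu(A)\to 0$ along subsets of a set where that density is bounded below to read off $\Phi_2\prec\Phi_1$ directly. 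Your route is shorter, avoids the series construction and its convergence bookkeeping, and makes the role of non-atomicity (access to every small value of $\mu(A)$) more transparent; the paper's route, by contrast, produces an explicit witness $f$ that fails to land in $L^{\Phi_2}$. An incidental benefit of your approach is that it sidesteps the identity $\int_X\Phi_2(\alpha\,M_{u,T}f)\,d\mu=\int_X f_0\,\Phi_2(\alpha u f)\,d\mu$ used in the paper's computation, which as written would require $u$ to factor through $T$.
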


\begin{proof} Suppose on the contrary, let $M_{u,T}$ be a non-zero bounded liner operator. Let
$$E_n=\{x\in X:|u(x)|>\frac{1}{n}\}\bigcap\{x\in X:|f_0(x)|>\frac{1}{n}\}.$$
Then $\{E_n\}_{n\in \mathbb{N}}$ is an increasing sequence of measurable sets. Since $M_{u,T}$ is non-zero, then $\mu(E_n)>0$ for some $n\in N$. Suppose $F\subset E=\cup_n E_n$ with $\mu(F)<\infty$. Since $\Phi_2\nless\Phi_1$, then there exists a sequence of positive numbers $\{y_n\}$ such that $y_n\uparrow\infty$ and $\Phi_2(y_n)>\Phi_1(2^nn^3y_n)$. Since $\mu$ is non-atomic, we can find a disjoint sequence $\{F_n\}$ of measurable subsets of $F$ such that $F_n\subseteq E_n$ and
$\mu(F_n)=\frac{\Phi_1(y_1)\mu(F)}{2^n\Phi_1(n^3y_n)}$.
 Let $\displaystyle f=\sum^\infty_{n=1}b_n\chi_{F_n}$, where $b_n=n^2y_n$, then for $n_0>\alpha$ we have
 \begin{align*}
\int_X\Phi_1(\alpha f)d\mu&=\displaystyle\sum^{\infty}_{n=1}\int_X\Phi_1(\alpha b_n)\chi_{F_n}\\
&=\sum^{n_0}_{n=1}\Phi_1(\alpha.b_n)\mu(F_n)+\sum\limits_{n\geq n_0}\Phi_1(\alpha.b_n)\mu(F_n)\\
&=\sum^{n_0}_{n=1}\Phi_1(\alpha.b_n)\mu(F_n)+\sum_{n\geq n_0}\frac{\Phi_1(\alpha.b_n)\phi_1( y_1)}{2^n\phi_1(n^3 y_n)}\\
&\leq\sum^{n_0}_{n=1}\Phi_1(\alpha.b_n)\mu(F_n)+\mu(F)\sum_{n\geq n_0}\frac{\Phi_1(n^3y_n)\phi_1( y_1)}{2^n\phi_1(n^3 y_n)}<\infty.
\end{align*}
This implies that $f\in L^{\Phi_1}(\Sigma)$. But for $m_0>0$ for which $\frac{1}{m_0}<\alpha$, we have
\begin{align*}
\int_X\Phi_2(\alpha M_{u,T}f)d\mu&=\int_Xf_0(x)\Phi_2(\alpha u.f)d\mu\\
&\geq\sum\limits_{n\geq m_0}\int_{F_n}f_0(x)\Phi_2(\alpha u.b_n)d\mu\\
&\geq\sum\limits_{n\geq m_0}\int_{F_n}\frac{1}{n}\Phi_2( \frac{1}{n^2}.b_n)d\mu\\
&\geq\sum\limits_{n\geq m_0}\int_{F_n}\frac{1}{n}\Phi_2(y_n)d\mu\\
&\geq\sum_{n\geq m_0}\frac{1}{n}\Phi_1(2^nn^3 y_n)\mu(F_n)\\
&\geq\mu(F)\sum_{n\geq n_0}\frac{1}{n}\Phi_2(y_1)=\infty.
\end{align*}
Which contradicts boundedness of $M_{u,T}$.
\end{proof}

\begin{lem}\label{l2} Let $\Phi_i$, $i=1,2,3$, be Young's functions for which
$$\Phi_2(xy)\leq\Phi_1(x)+\Phi_3(y),\quad x,y\geq0,$$

then $\Phi_1\nless\Phi_2$.
\end{lem}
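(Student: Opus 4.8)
The plan is to prove $\Phi_1\nless\Phi_2$ directly, by assuming $\Phi_1\prec\Phi_2$ and reaching a contradiction, using only convexity of $\Phi_2$ and the fact that $\lim_{t\to\infty}\Phi_2(t)=\infty$. The conditions $\Phi_1,\Phi_2\in\bigtriangleup_2$ that appear in Theorem~\ref{t1} are not needed here, and no measure-theoretic input is required: this is a pure growth statement about Young functions.

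First I would isolate the one consequence of the hypothesis that matters. Fix any $y_0>0$ and set $C:=\Phi_3(y_0)$, a finite real number. Putting $y=y_0$ in $\Phi_2(xy)\le\Phi_1(x)+\Phi_3(y)$ gives
$$\Phi_1(x)\ \ge\ \Phi_2(y_0\,x)-C\qquad(x\ge0),$$
so $\Phi_1$ dominates a fixed dilate of $\Phi_2$ up to an additive constant. Now suppose toward a contradiction that $\Phi_1\prec\Phi_2$. By the definition recalled above there are $a\ge0$ and $x_0\ge0$ with $\Phi_1(x)\le\Phi_2(ax)$ for all $x\ge x_0$; the value $a=0$ is impossible, since it would force $\Phi_1(x)\le\Phi_2(0)=0$ for all large $x$, contrary to $\Phi_1$ being a Young function, so $a>0$. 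Since $y_0$ in the display above was arbitrary, I now choose it after seeing $a$, taking $y_0:=2a$. Combining the two inequalities, for every $x\ge x_0$,
$$\Phi_2(2ax)-C\ \le\ \Phi_1(x)\ \le\ \Phi_2(ax).$$

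Finally I would use convexity of $\Phi_2$: since $\Phi_2$ is convex with $\Phi_2(0)=0$, we have $\Phi_2(\lambda t)\ge\lambda\Phi_2(t)$ whenever $\lambda\ge1$ and $t\ge0$ (write $t$ as a convex combination of $0$ and $\lambda t$). Applying this with $\lambda=2$, $t=ax$ gives $\Phi_2(2ax)\ge2\Phi_2(ax)$, and substituting into the last display yields $\Phi_2(ax)\le C$ for all $x\ge x_0$. This is absurd, because $\Phi_2(ax)\to\infty$ as $x\to\infty$. Hence $\Phi_1\nless\Phi_2$, as claimed. The only point needing a little care — and the closest thing to an obstacle — is the realisation that $y_0$ should be taken strictly larger than the dilation constant $a$, so that the faster internal scaling of the convex function $\Phi_2$ overwhelms the fixed additive constant $C=\Phi_3(y_0)$.
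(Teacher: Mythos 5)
Your argument is correct, and it is genuinely different from the one in the paper. The paper's proof goes through generalized inverses: from the hypothesis it extracts the inequality $\Phi_1^{-1}(t)\Phi_3^{-1}(t)\le\Phi_2^{-1}(2t)\le 2\Phi_2^{-1}(t)$, then assumes $\Phi_1(x)<\Phi_2(\delta x)$ for large $x$, applies $\Phi_2^{-1}$, and after cancelling $\Phi_1^{-1}(\Phi_1(x))=x$ concludes $\Phi_1(x)<\Phi_3(2\delta)$ for all large $x$, contradicting $\Phi_1(x)\to\infty$. You instead specialize the hypothesis at a single well-chosen point $y_0=2a$ (chosen \emph{after} the dilation constant $a$ of the assumed relation $\Phi_1\prec\Phi_2$ is known), obtaining $\Phi_2(2ax)-\Phi_3(2a)\le\Phi_1(x)\le\Phi_2(ax)$, and then use the doubling inequality $\Phi_2(2t)\ge 2\Phi_2(t)$ coming from convexity and $\Phi_2(0)=0$ to force $\Phi_2(ax)\le C$ for all large $x$, contradicting $\Phi_2\to\infty$. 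Each step of yours is sound, including the observation that $a=0$ is excluded because $\Phi_2(0)=0$ while $\Phi_1>0$ off the origin. What your route buys is that it is entirely elementary and sidesteps generalized inverses altogether; in particular it avoids the identity $\Phi_1^{-1}(\Phi_1(x))=x$, which requires some care when a Young function is not strictly increasing, and it never needs the companion inequality $\Phi_1^{-1}\Phi_3^{-1}\le\Phi_2^{-1}(2\,\cdot)$ that the paper also reuses later (in the proof of Theorem \ref{t3}); the paper's route, by contrast, establishes that reusable inverse-function inequality as a by-product.
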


\begin{proof} It is easy to get that  $\Phi_1^{-1}(x)\Phi_3^{-1}(x)\leq\Phi_2^{-1}(2x)\leq2\Phi_2^{-1}(x)$, for all $x\geq0$,.
Suppose on the contrary, hence there exists $\delta>0$ and $N>0$ such that
 $$\Phi_1(x)<\Phi_2(\delta x), \ \ \ \ \ \forall x\geq N.$$ Thus
 we have $\Phi_2^{-1}(\Phi_1(x))<\delta x$ and so $\Phi_1^{-1}(\Phi_1(x))\Phi_3^{-1}(\Phi_1(x))<2\delta x$, $\forall x\geq N$.
This implies that $\Phi_1(x)<\Phi_3(2\delta)$, for all $x\geq N$. This is a contradiction.
\end{proof}

The next Proposition is a main tools that we use in our investigation. 

\begin{pro}\label{p1} Suppose that $\Phi_2\nless\Phi_1$ and $\Phi_2\in \vartriangle_2$. If $E$ is a
non-atomic measurable set with $\mu(E)>0$, then there exists $f\in L^{\Phi_1}(X)$ such that $f \notin L^{\Phi_2}(E)$.
\end{pro}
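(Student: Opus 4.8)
The plan is to produce a single function $f$, supported on a subset of $E$, that lies in $L^{\Phi_1}(X)$ while its restriction to $E$ has infinite $L^{\Phi_2}$-norm. The gap between $\Phi_1$ and $\Phi_2$ supplies suitable ``heights'', and the non-atomicity of $E$ supplies room to spread them out.

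First I would record the combinatorial input. As in the proof of Proposition \ref{p0}, the hypothesis $\Phi_2\nless\Phi_1$ yields a strictly increasing sequence $y_n\uparrow\infty$ of positive reals with $\Phi_2(y_n)>\Phi_1(2^n n^3 y_n)$ for every $n$. Since $\Phi_1$ is convex with $\Phi_1(0)=0$, one has $\Phi_1(\lambda t)\geq\lambda\Phi_1(t)$ whenever $\lambda\geq1$; applying this with $\lambda=2^n n^3$ gives $\Phi_2(y_n)>2^n n^3\,\Phi_1(y_n)$.

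Next, fix $F\subseteq E$ with $0<\mu(F)<\infty$, which is possible by $\sigma$-finiteness. Because the restriction of $\mu$ to $F$ is finite and non-atomic, a Sierpi\'nski-type selection (the same device used in the proof of Proposition \ref{p0}) yields pairwise disjoint measurable sets $F_n\subseteq F$ with
$$\mu(F_n)=\frac{\Phi_1(y_1)\,\mu(F)}{2^n\,\Phi_1(y_n)};$$
this is admissible since $\Phi_1(y_1)\le\Phi_1(y_n)$ forces $\sum_n\mu(F_n)\le\mu(F)\sum_n 2^{-n}=\mu(F)$. Set $f=\sum_{n\ge1}y_n\chi_{F_n}$. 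Then $\int_X\Phi_1(f)\,d\mu=\sum_n\Phi_1(y_n)\mu(F_n)=\Phi_1(y_1)\mu(F)\sum_n 2^{-n}<\infty$, so $f\in L^{\Phi_1}(X)$. On the other hand, since the $F_n$ are disjoint subsets of $E$,
$$\int_E\Phi_2(f)\,d\mu\ \ge\ \sum_n\Phi_2(y_n)\mu(F_n)\ >\ \sum_n 2^n n^3\,\Phi_1(y_n)\cdot\frac{\Phi_1(y_1)\mu(F)}{2^n\Phi_1(y_n)}\ =\ \Phi_1(y_1)\mu(F)\sum_n n^3\ =\ \infty.$$

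Finally I would upgrade $\int_E\Phi_2(f)\,d\mu=\infty$ to $f\notin L^{\Phi_2}(E)$, and this is exactly where $\Phi_2\in\vartriangle_2$ is used. Writing $\Phi_2(2x)\le k\Phi_2(x)$ for $x\ge x_0$ and splitting the (finite-measure) support of $f$ according to whether $2^{-j}f\ge x_0$ or not, one obtains $\int_E\Phi_2(2^{-(j-1)}f)\,d\mu\le k\int_E\Phi_2(2^{-j}f)\,d\mu+\Phi_2(2x_0)\mu(F)$ for each $j\ge1$; iterating this inequality from the case $j=0$ shows $\int_E\Phi_2(2^{-j}f)\,d\mu=\infty$ for all $j$, hence $\int_E\Phi_2(f/c)\,d\mu=\infty$ for every $c>0$, i.e. $\|f\|_{L^{\Phi_2}(E)}=\infty$. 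The \emph{main obstacle} is precisely this last passage: without the $\vartriangle_2$ hypothesis an infinite $\Phi_2$-modular need not force an infinite Luxemburg norm, so the $\vartriangle_2$ condition on $\Phi_2$ together with the finiteness of $\mu(F)$ is what makes the argument close; the construction of the sets $F_n$ is routine once non-atomicity is invoked.
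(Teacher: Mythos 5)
Your proof is correct and follows essentially the same route as the paper: extract a sequence of heights from the failure of $\Phi_2\prec\Phi_1$, use non-atomicity to carve out disjoint subsets of $E$ with prescribed measures tuned so that the $\Phi_1$-modular converges while the $\Phi_2$-modular diverges, and sum. Your closing step, where $\Phi_2\in\vartriangle_2$ is used to upgrade the infinite modular $\int_E\Phi_2(|f|)\,d\mu=\infty$ to $f\notin L^{\Phi_2}(E)$, is actually more careful than the paper's argument, which stops at the infinite modular and never invokes the $\vartriangle_2$ hypothesis.
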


\begin{proof}
Suppose that $F\subset E$ and $\alpha=\mu(F)<\infty$. Since $\Phi_2\nless\Phi_1$, then we can find a sequence $\{x_n\}$ in $X$ such that $x_n\uparrow\infty$ with $\Phi_2(|x_n|)>\Phi_1(n|x_n|)$. Let $n_0\in \mathbb{N}$ such that $\alpha>\displaystyle\sum_{n\geq n_0}\frac{1}{n^2}$ and $\Phi_1(x_n)\geq1$ for all $n\geq n_0$. Since $\mu$ is non-atomic, then there exists a measurable set $F_0\subset F$ such that
$\mu(F_0)= \sum_{n\geq n_0}\frac{1}{n^2}$. Similarly we can find a set $F_1\in \Sigma, F_1\subset F_0$ such that $\mu(F_1)=n_0^{-2}$. Since
$\mu(F_0-F_1)>0$, we can again find $F_2\in \Sigma, F_2\subset F_0- F_1$ such that $\mu(F_2)=(n_0+1)^{-2}$. Repeating the process, we find disjoint sets $F_n\in \Sigma, F_n\subset F_{n-2}- F_{n-1}$ such that $\mu(F_n)=(n_0+n+1)^{-2}$. Let $E_k\subset F_k, E_k\in \Sigma$, be chosen such that $\mu(E_k)=\frac{\mu(F_k)}{\phi_1(|x_k|)}$. If we take $\displaystyle f=\sum^\infty_{n=1}x_n\chi_{E_n}$, then we have;
\begin{align*}
\int_X\Phi_1(|f|)d\mu&=\displaystyle\sum^{\infty}_{n=1}\int_X\Phi_1(|x_n|)\chi_{E_n}\\
&=\sum^{n_0}_{n=1}\Phi_1(|x_n|)\mu(E_n)+\sum\limits_{n\geq n_0}\Phi_1(| x_n|)\frac{\mu(F_n)}{\Phi_1(|x_n|)}<\infty.
\end{align*}
This means that $f\in L^{\Phi_1}(X)$. Also we have
\begin{align*}
\int_E\Phi_2(|f|)d\mu&=\displaystyle\sum^{\infty}_{n=1}\Phi_2(|x_n|)\mu(E_n)\\
&>\sum^{\infty}_{n>n_0}n\Phi_1(|x_n|)\mu(E_n)\\
&=\sum^{\infty}_{n>n_0}n\mu(F_n)\\
&=\sum^{\infty}_{n>n_0}\frac{1}{n}=\infty.
\end{align*}
This shows that $f\notin L^{\Phi_2}(E)$.
\end{proof}
Now we provide some necessary and sufficient conditions for boundedness of multiplication operators between different Orlicz spaces.
\begin{thm}\label{t3}  Let $u\in L^0(\Sigma)$, $\Phi_1, \Phi_2,\Phi_3$ be Young's functions such that $\Phi_1(xy)\leq\Phi_2(x)+\Phi_3(y)$. If $u$ induces a bounded multiplication operator $M_u:L^{\Phi_1}(\Sigma)\rightarrow L^{\Phi_2}(\Sigma)$, then
\begin{enumerate}

\item[(i)] $u(x)=0$ for $\mu$-almost all $x\in B$.

\item[(ii)] $\sup\limits_{n\in N}|u(A_n)|.\Phi_3^{-1}(\frac{1}{\mu(A_n)})<\infty$.
\end{enumerate}
\end{thm}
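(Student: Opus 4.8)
The plan is to split the analysis of the boundedness of $M_u$ into its behaviour on the non-atomic part $B$ and on each atom $A_n$, exploiting that the decomposition $X = \left(\bigcup_n A_n\right)\cup B$ is orthogonal for the Orlicz-norm estimates. The hypothesis $\Phi_1(xy)\leq\Phi_2(x)+\Phi_3(y)$ plays two roles: via Lemma \ref{l2} it forces $\Phi_2\nless\Phi_1$ (after the obvious relabelling of indices to match that lemma), and it is exactly the inequality needed to run the atomic estimate in (ii).

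First I would prove (i). Suppose, for contradiction, that $u$ does not vanish a.e.\ on $B$; then $E = \{x\in B : |u(x)|>1/m\}$ has positive measure for some $m\in\mathbb{N}$, and $E$ is a non-atomic measurable set. Since $\Phi_2\nless\Phi_1$ and $\Phi_2\in\triangle_2$ (the $\triangle_2$ hypothesis on $\Phi_2$ is in force throughout this section; if it is needed it follows from $\Phi_2\in\triangle'$ as remarked, or I would state it explicitly), Proposition \ref{p1} supplies $f\in L^{\Phi_1}(X)$ with $f\notin L^{\Phi_2}(E)$. Then $u\cdot f$ satisfies $|u f|\geq \frac{1}{m}|f|$ on $E$, so $\int_E\Phi_2(\lambda|uf|)\,d\mu \geq \int_E\Phi_2(\tfrac{\lambda}{m}|f|)\,d\mu = \infty$ for every $\lambda>0$, whence $M_u f = u f\notin L^{\Phi_2}(\Sigma)$, contradicting boundedness. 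Hence $u=0$ $\mu$-a.e.\ on $B$.

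Next I would prove (ii). On a single atom $A_n$, every measurable function is $\mu$-a.e.\ constant, so $u$ restricted to $A_n$ equals a constant $u(A_n)$, and for a scalar $c$ one computes $\|c\,\chi_{A_n}\|_{\Phi} = |c|/\Phi^{-1}(1/\mu(A_n))$ directly from the definition of the Luxemburg norm (choosing $k$ so that $\Phi(|c|/k)\mu(A_n)=1$). Apply $M_u$ to the test function $f_n = \chi_{A_n}/\Phi_1^{-1}(1/\mu(A_n))$, which has $\|f_n\|_{\Phi_1}=1$. Then $M_u f_n = u(A_n)\,\chi_{A_n}/\Phi_1^{-1}(1/\mu(A_n))$, and boundedness gives
\begin{equation*}
\|M_u\| \;\geq\; \|M_u f_n\|_{\Phi_2} \;=\; \frac{|u(A_n)|\,\Phi_2^{-1}(1/\mu(A_n))}{\Phi_1^{-1}(1/\mu(A_n))}.
\end{equation*}
It remains to relate this last quantity to $|u(A_n)|\,\Phi_3^{-1}(1/\mu(A_n))$. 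Here I invoke the consequence of the Young-type inequality recorded in the proof of Lemma \ref{l2}: from $\Phi_1(xy)\leq\Phi_2(x)+\Phi_3(y)$ one gets $\Phi_2^{-1}(x)\,\Phi_3^{-1}(x)\leq 2\Phi_1^{-1}(x)$, equivalently $\Phi_3^{-1}(x)\leq 2\,\Phi_1^{-1}(x)/\Phi_2^{-1}(x)$, for all $x\geq 0$. Taking $x = 1/\mu(A_n)$ and combining with the displayed inequality yields
\begin{equation*}
|u(A_n)|\,\Phi_3^{-1}\!\left(\tfrac{1}{\mu(A_n)}\right) \;\leq\; 2\,\frac{|u(A_n)|\,\Phi_1^{-1}(1/\mu(A_n))}{\Phi_2^{-1}(1/\mu(A_n))} \;\leq\; 2\|M_u\|,
\end{equation*}
and taking the supremum over $n$ gives (ii).

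The main obstacle I anticipate is the bookkeeping around the direction of the Young-type inequalities and the associated inverse inequalities: one must be careful that $\Phi_1,\Phi_2,\Phi_3$ here are in the order $\Phi_1(xy)\leq\Phi_2(x)+\Phi_3(y)$, which is the mirror of the labelling in Lemma \ref{l1} and Lemma \ref{l2}, so the clean estimate $\Phi_2^{-1}\Phi_3^{-1}\leq 2\Phi_1^{-1}$ has to be re-derived for this arrangement rather than quoted verbatim. A secondary point is verifying the exact formula $\|c\chi_A\|_\Phi = |c|/\Phi^{-1}(1/\mu(A))$ for an atom $A$ of finite measure; this is routine from the Luxemburg norm but should be stated, since the whole atomic estimate hinges on it. The non-atomic part (i) is essentially an immediate application of Proposition \ref{p1} once one checks that $E$ inherits non-atomicity from $B$.
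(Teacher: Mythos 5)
Your proposal follows the paper's own proof essentially step for step: part (i) via Lemma \ref{l2} (after relabelling the indices) together with Proposition \ref{p1} applied to $E=\{x\in B:|u(x)|>1/m\}$, and part (ii) via normalized indicator test functions on the atoms combined with the inverse-function inequality $\Phi_2^{-1}(t)\Phi_3^{-1}(t)\le 2\Phi_1^{-1}(t)$ extracted from the proof of Lemma \ref{l2}; you are in fact more careful than the paper in flagging that Proposition \ref{p1} requires $\Phi_2\in\triangle_2$, a hypothesis Theorem \ref{t3} does not state. The only defects are two reciprocal slips in part (ii): by your own (correct) formula $\|c\chi_A\|_{\Phi}=|c|/\Phi^{-1}(1/\mu(A))$, the unit-norm test function must be $f_n=\Phi_1^{-1}(1/\mu(A_n))\,\chi_{A_n}$ rather than $\chi_{A_n}/\Phi_1^{-1}(1/\mu(A_n))$, and then $\|M_uf_n\|_{\Phi_2}=|u(A_n)|\,\Phi_1^{-1}(1/\mu(A_n))/\Phi_2^{-1}(1/\mu(A_n))$, not the inverted quotient you wrote. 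With these two corrections the final displayed chain is exactly the paper's estimate and the argument closes.
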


\begin{proof}
  Suppose that $M_u$ is bounded. First we prove (i). If $\mu\{x\in B;|u(x)|\neq0\}>0$, then there exists a positive constant $\delta$ such that $\mu\{x\in B;|u(x)|>\delta\}>0$. Put $E=\{x\in B;|u(x)|>\delta\}$. Since $\mu(E)>0$ and $E$ is non-atomic, then by Lemma \ref{l2} and Proposition \ref{p1}, there exists $f\in L^{\Phi_1}(\Sigma)$ such that $f\notin L^{\Phi_2}(E)$ and so
$$\infty=\int_E\Phi_2(\frac{\delta f(x)}{\parallel M_uf\parallel_{\Phi_2}})d\mu\leq\int_X\Phi_2(\frac{u(x). f(x)}{\parallel M_uf\parallel_{\Phi_2}})d\mu\leq1,$$
which is a contraction. Thus (i) holds. Now we prove (ii). For any $n\in N$, put $f_n=\Phi_1^{-1}(\frac{1}{\mu(A_n)})\chi_{A_n}$. It is clear that $f_n\in L^{\Phi_1}(\Sigma)$ and $\parallel f_n\parallel_{\Phi_1}=1$. So we have
\begin{align*}
1&\geq\int_X\Phi_2(\frac{u(x). f_n(x)}{\parallel M_uf_n\parallel_{\Phi_2}})d\mu\\
&=\int_{A_n}\Phi_2(\frac{u(x).\Phi_1^{-1}(\frac{1}{\mu(A_n)})}{\parallel M_uf_n\parallel_{\Phi_2}})d\mu\\
&=\Phi_2(\frac{u(A_n).\Phi_1^{-1}(\frac{1}{\mu(A_n)})}{\parallel M_uf_n\parallel_{\Phi_2}})\mu(A_n).
\end{align*}
Therefore $\frac{u(A_n).\Phi_1^{-1}(\frac{1}{\mu(A_n)})}{\parallel M_uf_n\parallel_{\Phi_2}}\leq\Phi_2^{-1}(\frac{1}{\mu(A_n)})$
and consequently by the proof of Lemma \ref{l2}
\begin{align*}
M=\sup_n u(A_n).\Phi_3^{-1}(\frac{1}{\mu(A_n)})\\
&\leq2\parallel M_uf_n\parallel_{\Phi_2}\\
&\leq2\|M_u\|<\infty.
\end{align*}
This completes the proof.
\end{proof}

\begin{cor} Under the assumptions of Theorem \ref{t3}, if $(X, \Sigma, \mu)$ is a non-atomic measure space, then the multiplication operator $M_u$ from $L^{\Phi_1}(\Sigma)$ into $L^{\Phi_2}(\Sigma)$ is bounded if and only if $M_u=0$.
\end{cor}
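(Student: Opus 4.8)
The plan is to derive this corollary directly from Theorem~\ref{t3} by observing that in a non-atomic measure space the decomposition $X = \left(\bigcup_{n} A_n\right) \cup B$ has no atomic part, so $X = B$ up to a $\mu$-null set. First I would note the ``if'' direction is trivial: the zero operator is always bounded. For the ``only if'' direction, suppose $M_u$ is bounded from $L^{\Phi_1}(\Sigma)$ into $L^{\Phi_2}(\Sigma)$. Since $(X,\Sigma,\mu)$ is non-atomic, there are no $\Sigma$-atoms, and hence in the canonical partition the index set for $\{A_n\}$ is empty; equivalently $\mu(X \setminus B) = 0$. Thus condition (i) of Theorem~\ref{t3}, namely $u(x) = 0$ for $\mu$-almost all $x \in B$, becomes $u(x) = 0$ for $\mu$-almost all $x \in X$, which says precisely that $M_u = 0$ as an operator on $L^0(\Sigma)$ (and a fortiori between the Orlicz spaces).

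One point that needs care is that Theorem~\ref{t3} is stated under the hypothesis $\Phi_1(xy) \le \Phi_2(x) + \Phi_3(y)$ for some Young function $\Phi_3$; the corollary inherits this as part of ``the assumptions of Theorem~\ref{t3}'', so invoking part (i) is legitimate. I would also remark that condition (ii) of Theorem~\ref{t3} is then vacuous, since the supremum is over the empty set, so nothing further is needed. It may be worth spelling out that ``$M_u = 0$'' means $u \cdot f = 0$ for every $f \in L^{\Phi_1}(\Sigma)$, which holds iff $u = 0$ $\mu$-a.e., because $L^{\Phi_1}(\Sigma)$ contains, for instance, the characteristic functions of all finite-measure sets and these separate points of $L^0(\Sigma)$ in the required sense.

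The main (and only real) obstacle is simply making sure the reduction ``non-atomic $\Rightarrow$ $X = B$ a.e.'' is invoked correctly against the canonical partition recalled at the start of Section~2, and then quoting Theorem~\ref{t3}(i) verbatim. There is no new estimate to perform; the corollary is a one-line consequence once the atomic/non-atomic dichotomy is in hand. I would therefore keep the proof to two or three sentences: state the trivial direction, apply the partition fact, apply Theorem~\ref{t3}(i), and conclude $u = 0$ $\mu$-a.e.\ hence $M_u = 0$.
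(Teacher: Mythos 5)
Your proposal is correct and is exactly the argument the paper intends: the corollary is stated as an immediate consequence of Theorem~\ref{t3}, obtained by noting that non-atomicity forces $B=X$ (up to a null set) in the canonical partition, so condition (i) gives $u=0$ $\mu$-a.e., while the converse direction is trivial. Nothing further is needed.
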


\begin{thm}\label{t44}
Let $u\in L^0(\Sigma)$, $\Phi_1, \Phi_2,\Phi_3$ be Young's functions such that  $\Phi_1,\Phi_2\in\triangle'$ and $\Phi_2\circ\Phi^{-1}_1$ be a Young's function. Then $u$ induces a bounded multiplication operator $M_u:L^{\Phi_1}(\Sigma)\rightarrow L^{\Phi_2}(\Sigma)$,  if
\begin{enumerate}

\item[(i)] $u(x)=0$ for $\mu$-almost all $x\in B$.

\item[(ii)] $\sup\limits_{n\in N}\Phi_2[\frac{u(A_n)}{\Phi_1^{-1}(\mu(A_n))}]\mu(A_n)<\infty$.
\end{enumerate}
\end{thm}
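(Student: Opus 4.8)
The plan is to mimic, in reverse, the argument in the proof of Theorem \ref{t3}: reduce everything to the atomic part of $X$ and estimate the modular $\int_X\Phi_2(|uf|/C)\,d\mu$ directly. By (i), $M_uf=uf$ is supported on $\bigcup_nA_n$, and since every $\Sigma$-measurable function is $\mu$-a.e.\ constant on each atom $A_n$, it suffices to produce a constant $C>0$ with the property that $\|f\|_{\Phi_1}\le1$ implies $\int_X\Phi_2(|uf|/C)\,d\mu\le1$; then $\|M_u\|\le C$ follows by homogeneity. Now $\|f\|_{\Phi_1}\le1$ gives $\sum_n\Phi_1(|f(A_n)|)\mu(A_n)=\int_X\Phi_1(|f|)\,d\mu\le1$, so writing $m_n=\mu(A_n)$, $a_n=|u(A_n)|$, $c_n=|f(A_n)|$ and $t_n=\Phi_1(c_n)m_n$, we have $\sum_nt_n\le1$, each $t_n\le1$, and $c_n=\Phi_1^{-1}(t_n/m_n)$; the goal becomes $\sum_n\Phi_2(a_nc_n/C)m_n\le1$.

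The first key step is the factorization
$$\frac{a_nc_n}{C}=\frac{a_n}{\Phi_1^{-1}(m_n)}\cdot\frac{\Phi_1^{-1}(m_n)\,c_n}{C},$$
to which I apply $\Phi_2\in\triangle'$ in the form $\Phi_2(xy)\le c\,\Phi_2(x)\Phi_2(y)$ and then multiply by $m_n$; hypothesis (ii) bounds $\Phi_2(a_n/\Phi_1^{-1}(m_n))\,m_n$ by $M:=\sup_k\Phi_2[u(A_k)/\Phi_1^{-1}(\mu(A_k))]\mu(A_k)<\infty$, so
$$\Phi_2\!\left(\frac{a_nc_n}{C}\right)m_n\;\le\;cM\,\Phi_2\!\left(\frac{\Phi_1^{-1}(m_n)\,c_n}{C}\right).$$
The second key step reformulates $\Phi_1\in\triangle'$ as $\Phi_1^{-1}(s)\Phi_1^{-1}(s')\le\Phi_1^{-1}(css')$ (apply $\Phi_1^{-1}$ to $\Phi_1(xy)\le c\Phi_1(x)\Phi_1(y)$ with $x=\Phi_1^{-1}(s)$, $y=\Phi_1^{-1}(s')$). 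Since $c_n=\Phi_1^{-1}(\Phi_1(c_n))$ and $\Phi_1^{-1}$ is concave with $\Phi_1^{-1}(0)=0$,
$$\Phi_1^{-1}(m_n)\,c_n\;\le\;\Phi_1^{-1}\!\big(c\,m_n\Phi_1(c_n)\big)=\Phi_1^{-1}(c\,t_n)\;\le\;c'\,\Phi_1^{-1}(t_n),\qquad c':=\max\{c,1\}.$$
Plugging this in, and using that $\Phi_2$ is convex with $\Phi_2(0)=0$ (hence $\Phi_2(\lambda z)\le\lambda\Phi_2(z)$ for $\lambda\le1$) together with the hypothesis that $\Phi_0:=\Phi_2\circ\Phi_1^{-1}$ is a Young function (hence $\Phi_0(t)\le\Phi_0(1)\,t$ for $t\in[0,1]$), I obtain, for any $C\ge c'$,
$$\Phi_2\!\left(\frac{a_nc_n}{C}\right)m_n\;\le\;cM\cdot\frac{c'}{C}\,\Phi_0(t_n)\;\le\;\frac{cM\,c'\,\Phi_0(1)}{C}\,t_n.$$
Summing over $n$ and using $\sum_nt_n\le1$, the choice $C:=\max\{c',\,cM\,c'\,\Phi_0(1)\}$ yields $\int_X\Phi_2(|uf|/C)\,d\mu\le1$, which is exactly what we needed.

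I expect the only real obstacle to be the bookkeeping of the two factorizations and of the constants $c,c',M,\Phi_0(1)$ so that a single $C$ works, together with the technical nuisance that the paper's $\triangle'$ carries a threshold $x_0$. If $x_0>0$ the submultiplicativity inequalities for $\Phi_1,\Phi_2$ (and the derived one for $\Phi_1^{-1}$) are valid only once the relevant arguments exceed $\Phi_i(x_0)$, so the estimates above must be supplemented with a routine, finitely-many-exceptions argument for the atoms or values falling below the threshold (or, as is customary, one simply assumes the $\triangle'$ conditions hold globally). The remaining ingredients — the reduction to atoms, which uses only (i) and $\sigma$-finiteness, and the elementary convexity/concavity facts about $\Phi_2$ and $\Phi_1^{-1}$ — present no difficulty.
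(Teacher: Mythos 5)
Your proof is correct and follows essentially the same route as the paper's: reduce to the atomic part via (i), factor $u(A_n)f(A_n)$ so that hypothesis (ii) controls one factor through $\Phi_2\in\triangle'$, and control the remaining factor through $\Phi_1\in\triangle'$ together with the hypothesis that $\Phi_0=\Phi_2\circ\Phi_1^{-1}$ is a Young function. The only (harmless) divergence is the final summation, where you use the linear bound $\Phi_0(t)\le\Phi_0(1)\,t$ on $[0,1]$ while the paper pulls the sum inside $\Phi_0$ by superadditivity; your normalization, which builds the constant $C$ into the modular from the outset so that the Luxemburg norm bound is immediate, is if anything tidier, and your remark about the $x_0>0$ threshold in $\triangle'$ is a legitimate caveat that the paper silently ignores.
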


\begin{proof}
 Suppose that (i) and (ii) hold. Put $\sup\limits_{n\in N}\Phi_2[\frac{u(A_n)}{\Phi_1^{-1}(\mu(A_n))}]\mu(A_n)=M $. Then for each $f\in L^{\Phi_1}(X)$, we have
\begin{align*}
\int_X\Phi_2(M_uf)d\mu&=\int_{x\in B}\Phi_2(u(x).f(x))d\mu+\int_{x\in \bigcup A_n}\Phi_2(u(x).f(x))d\mu\\
&=\sum\limits_{n\in N}\int_{x\in A_n}\Phi_2(u(x).f(x))d\mu\\
&=\sum\limits_{n\in N}\Phi_2(u(A_n).f(A_n))\mu(A_n)\\
&=\sum\limits_{n\in N}\Phi_2[\Phi_1^{-1}(\mu(A_n).\Phi_1^{-1}\circ\Phi_1(f(A_n))\frac{u(A_n)}{\Phi_1^{-1}(\mu(A_n))}]\mu(A_n)\\
&\leq b.\sum\limits_{n\in N}\Phi_2\circ\Phi_1^{-1}[c\mu(A_n).\Phi_1(f(A_n))].\Phi_2[\frac{u(A_n)}{\Phi_1^{-1}(\mu(A_n))}]\mu(A_n)\\
&\leq b.\sum\limits_{n\in N}\Phi_2\circ\Phi_1^{-1}[c\mu(A_n).\Phi_1(f(A_n))].M\\
&\leq bM.\Phi_2\circ\Phi_1^{-1}[c\sum\limits_{n\in N}\mu(A_n).\Phi_1(f(A_n))].
\end{align*}
Since $\parallel f\parallel_{\Phi_1}\leq1$ ,Therefor we get that
\begin{align*}
\int_X\Phi_2(M_uf)d\mu&\leq bM.\Phi_2\circ\Phi_1^{-1}(c)<\infty.\\
\end{align*}
This implies that $\|M_u(f)\|_{\Phi_2}\leq bM.\Phi_2\circ\Phi_1^{-1}(c)+1$ and so $M_u$ is bounded.\\
\end{proof}

In the sequel we give some necessary and sufficient conditions under which the composition operator $C_T$ is a bounded operator between different Orlicz space.

\begin{thm}\label{t4} Let $T:X\rightarrow X$ be a non-singular measurable transformation and $\Phi_1, \Phi_2$ be Young's functions such that $\Phi_2\nless\Phi_1$. If T induces the composition operator $C_T:L^{\Phi_1}(\Sigma)\rightarrow L^{\Phi_2}(\Sigma)$, then
\begin{enumerate}
\item[(i)] $f_0(x)=0$ for $\mu$-almost all $x\in B$.

\item[(ii)] $\sup\limits_{n\in N}\frac{\Phi_1^{-1}(\frac{1}{\mu(A_n)})}{\Phi_2^{-1}(\frac{1}{f_0(A_n)\mu(A_n)})}<\infty$.
\end{enumerate}
\end{thm}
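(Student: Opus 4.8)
The plan is to follow the proof of Theorem~\ref{t3}, with the multiplier $u$ replaced by the Radon--Nikodym density $f_0=d(\mu\circ T^{-1})/d\mu$. The one analytic tool I will use throughout is the change of variables identity: for every non-negative $\Sigma$-measurable $g$ one has $\int_X g(T(x))\,d\mu(x)=\int_X g\,d(\mu\circ T^{-1})=\int_X g\,f_0\,d\mu$, proved by verifying it for $g=\chi_A$ and extending by linearity and monotone convergence. Applying it with $g=\Phi_2(|f|/\lambda)$ (so that $g\circ T=\Phi_2(|C_Tf|/\lambda)$) gives, for every $f\in L^0(\Sigma)$ and $\lambda>0$,
\begin{equation}\label{cov}
\int_X\Phi_2\!\left(\frac{|C_Tf|}{\lambda}\right)d\mu=\int_X\Phi_2\!\left(\frac{|f|}{\lambda}\right)f_0\,d\mu .
\end{equation}
Both conclusions will follow by inserting well-chosen test functions into \eqref{cov}, exactly as the $f_n$'s are used for $M_u$ in Theorem~\ref{t3}.

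For (i) I would argue by contradiction. Suppose $\mu\{x\in B:f_0(x)\neq 0\}>0$; then there is $\delta>0$ with $\mu(E)>0$ for $E:=\{x\in B:f_0(x)>\delta\}$, and $E$, being a measurable subset of the non-atomic part $B$, is itself non-atomic. Since $\Phi_2\nless\Phi_1$, Proposition~\ref{p1} supplies $f\in L^{\Phi_1}(X)$ with $f\notin L^{\Phi_2}(E)$, i.e. $\int_E\Phi_2(|f|/\lambda)\,d\mu=\infty$ for every $\lambda>0$. Using $f_0\geq 0$ on $X$ and $f_0>\delta$ on $E$ in \eqref{cov},
\[
\int_X\Phi_2\!\left(\frac{|C_Tf|}{\lambda}\right)d\mu\;\geq\;\int_E\Phi_2\!\left(\frac{|f|}{\lambda}\right)f_0\,d\mu\;\geq\;\delta\int_E\Phi_2\!\left(\frac{|f|}{\lambda}\right)d\mu\;=\;\infty
\]
for all $\lambda>0$, so $C_Tf\notin L^{\Phi_2}(\Sigma)$ although $f\in L^{\Phi_1}(\Sigma)$; this contradicts the boundedness of $C_T$ and proves (i).

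For (ii) I would test with $f_n=\Phi_1^{-1}\!\big(1/\mu(A_n)\big)\chi_{A_n}$. Since every atom has finite positive measure, $f_n\in L^{\Phi_1}(\Sigma)$ and $\|f_n\|_{\Phi_1}\leq 1$; and since $A_n$ is an atom, $f_0$ agrees $\mu$-a.e. on $A_n$ with a constant, written $f_0(A_n)$, so $\mu(T^{-1}(A_n))=\int_{A_n}f_0\,d\mu=f_0(A_n)\mu(A_n)$. If $f_0(A_n)=0$ the $n$-th term of the supremum in (ii) vanishes, so assume $f_0(A_n)>0$; then $C_Tf_n=\Phi_1^{-1}\!\big(1/\mu(A_n)\big)\chi_{T^{-1}(A_n)}$ is a non-zero element of $L^{\Phi_2}(\Sigma)$, hence $\|C_Tf_n\|_{\Phi_2}>0$. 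Putting $\lambda=\|C_Tf_n\|_{\Phi_2}$ in \eqref{cov} and using $\int_X\Phi_2\big(C_Tf_n/\|C_Tf_n\|_{\Phi_2}\big)\,d\mu\leq 1$ yields
\[
\Phi_2\!\left(\frac{\Phi_1^{-1}\!\big(1/\mu(A_n)\big)}{\|C_Tf_n\|_{\Phi_2}}\right)f_0(A_n)\mu(A_n)\leq 1 ,
\]
and then, since $\Phi_2(a)\leq t$ forces $a\leq\Phi_2^{-1}(t)$,
\[
\frac{\Phi_1^{-1}\!\big(1/\mu(A_n)\big)}{\Phi_2^{-1}\!\big(1/(f_0(A_n)\mu(A_n))\big)}\;\leq\;\|C_Tf_n\|_{\Phi_2}\;\leq\;\|C_T\|\,\|f_n\|_{\Phi_1}\;\leq\;\|C_T\|<\infty .
\]
Taking the supremum over $n$ gives (ii).

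The calculations themselves are routine, and the only substantive input — the existence, on a non-atomic set of positive measure, of a function in $L^{\Phi_1}$ that escapes $L^{\Phi_2}(E)$ — has already been isolated as Proposition~\ref{p1}. The step I expect to require the most care is the clean bookkeeping around the image-measure identity \eqref{cov}: one must keep the density $f_0$ attached to the right integral, and one uses (implicitly, as in Theorem~\ref{t3}) that a measurable function is $\mu$-a.e.\ constant on an atom — this is precisely what converts the estimate for $f_n$ into the claimed supremum bound.
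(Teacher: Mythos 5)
Your proposal is correct and follows essentially the same route as the paper: part (i) by contradiction via Proposition~\ref{p1} applied to the non-atomic set $\{x\in B: f_0(x)>\delta\}$, and part (ii) by testing with $f_n=\Phi_1^{-1}(1/\mu(A_n))\chi_{A_n}$ and the change-of-variables identity $\int_X\Phi_2(|C_Tf|/\lambda)\,d\mu=\int_X\Phi_2(|f|/\lambda)f_0\,d\mu$, which the paper uses implicitly. The only cosmetic difference is that the paper also cites Lemma~\ref{l2}, which is superfluous here since $\Phi_2\nless\Phi_1$ is already a hypothesis, as you correctly noted by omitting it.
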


\begin{proof} If $\mu(\{x\in B; f_0(x)\neq0\})>0$, then there exists a positive constant $\delta$ such that $\mu(\{x\in B; f_0(x)>\delta\})>0$. Let $E=\{x\in B; f_0(x)>\delta\}$. Since $\mu(E)>0$ and $E$ is non-atomic, by Lemma \ref{l2} and Proposition \ref{p1}, there exists $f\in L^{\Phi_1}(\Sigma)$ such that $f\notin L^{\Phi_2}(E)$. Then we have
\begin{align*}
\infty&=\int_E\delta\Phi_2(\frac{f(x)}{\parallel C_Tf\parallel_{\Phi_2}})d\mu\\
&\leq\int_Ef_0(x)\Phi_2(\frac{ f(x)}{\parallel C_Tf\parallel_{\Phi_2}})d\mu\\
&\leq\int_X\Phi_2(\frac{C_Tf(x)}{\parallel C_Tf\parallel_{\Phi_2}})d\mu\\
&\leq1,
\end{align*}
which is a contraction. Now we prove (ii), for this we set $f_n=\Phi_1^{-1}(\frac{1}{\mu(A_n)})\chi_{A_n}$. It is clear that $f_n\in L^{\Phi_1}(\Sigma)$ and $\parallel f_n\parallel_{\Phi_1}=1$. Hence we have
\begin{align*}
1&\geq\int_X\Phi_2(\frac{ C_Tf_n(x)}{\parallel C_Tf_n\parallel_{\Phi_2}})d\mu\\
&=\int_{A_n}f_0(x)\Phi_2(\frac{\Phi_1^{-1}(\frac{1}{\mu(A_n)})}{\parallel C_Tf_n\parallel_{\Phi_2}})d\mu\\
&=f_0(A_n)\Phi_2(\frac{\Phi_1^{-1}(\frac{1}{\mu(A_n)})}{\parallel C_Tf_n\parallel_{\Phi_2}})\mu(A_n).
\end{align*}
Then we get that $\frac{\Phi_1^{-1}(\frac{1}{\mu(A_n)})}{\parallel C_Tf_n\parallel_{\Phi_2}}\leq\Phi_2^{-1}(\frac{1}{f_0(A_n)\mu(A_n)})$,
this implies that
\begin{center}
$\sup\limits_{n\in N}\frac{\Phi_1^{-1}(\frac{1}{\mu(A_n)})}{\Phi_2^{-1}(\frac{1}{f_0(A_n)\mu(A_n)})}\leq\|C_T\|<\infty$
\end{center}
and so (ii) holds.
\end{proof}

\begin{thm}\label{t45}  Let $T:X\rightarrow X$ be a non-singular measurable transformation, $\Phi_1,\Phi_2\in\triangle'$.  and $\Phi_2\circ \Phi^{-1}_1$ be a Young's function. Then $T$ induces a the composition operator $C_T:L^{\Phi_1}(X)\rightarrow L^{\Phi_2}(X)$, if
\begin{enumerate}

\item[(i)] $f_0(x)=0$ for $\mu$-almost all $x\in B$ .

\item[(ii)] $\sup\limits_{n\in N}\Phi_2[\frac{1}{\Phi_1^{-1}(\mu(A_n))}]f_0(A_n)\mu(A_n)<\infty$.
\end{enumerate}
\end{thm}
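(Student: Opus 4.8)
The plan is to mimic closely the computation already carried out in the proof of Theorem \ref{t44}, since the composition operator differs from the multiplication operator only in that the weight $u$ is replaced by the constant $1$ on each atom and an extra Radon--Nikodym factor $f_0$ appears in the integral. First I would split the integral defining $\int_X\Phi_2(C_Tf)\,d\mu$ over the canonical decomposition $X=\left(\bigcup_n A_n\right)\cup B$. On $B$ the hypothesis (i) forces $f_0=0$ $\mu$-a.e., and since $C_T$ acts via pullback by $T$, the change-of-variables/Radon--Nikodym identity $\int_X (g\circ T)\,d\mu=\int_X g\,f_0\,d\mu$ kills the $B$-part entirely. This reduces everything to the atomic part.

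On the atomic part, I would use that each $A_n$ is an atom, so any measurable function is a.e.\ constant on $A_n$; write $f(A_n)$ for that value. Using the change-of-variables identity again, the contribution of $A_n$ to $\int_X\Phi_2(C_Tf)\,d\mu$ becomes $\Phi_2(f(A_n))\,f_0(A_n)\,\mu(A_n)$ (after absorbing the Radon--Nikodym derivative into the integral over $A_n$, exactly as in the displayed chain in Theorem \ref{t4}). Then I would factor $f(A_n)=\Phi_1^{-1}\!\big(\mu(A_n)\Phi_1(f(A_n))\big)\cdot\frac{1}{\Phi_1^{-1}(\mu(A_n))}\cdot\Phi_1^{-1}(\mu(A_n))/\Phi_1^{-1}\!\big(\mu(A_n)\big)$ — i.e.\ the same algebraic splitting used in Theorem \ref{t44}, with $u(A_n)$ replaced by $1$ — and apply the $\triangle'$ condition for $\Phi_2$ to separate the two factors, giving a bound of the form
\begin{align*}
\int_X\Phi_2(C_Tf)\,d\mu\le b\sum_{n}\Phi_2\!\circ\!\Phi_1^{-1}\!\big(c\,\mu(A_n)\Phi_1(f(A_n))\big)\,\Phi_2\!\Big[\tfrac{1}{\Phi_1^{-1}(\mu(A_n))}\Big]f_0(A_n)\mu(A_n).
\end{align*}
Hypothesis (ii) bounds the second factor by a constant $M$, and convexity of the Young's function $\Phi_2\circ\Phi_1^{-1}$ (together with $\sum_n\mu(A_n)\Phi_1(f(A_n))\le\int_X\Phi_1(f)\,d\mu\le 1$ when $\|f\|_{\Phi_1}\le 1$) pulls the sum inside, yielding $\int_X\Phi_2(C_Tf)\,d\mu\le bM\,\Phi_2\!\circ\!\Phi_1^{-1}(c)<\infty$. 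This gives a uniform modular bound on the unit ball of $L^{\Phi_1}$, hence $\|C_Tf\|_{\Phi_2}\le bM\,\Phi_2\!\circ\!\Phi_1^{-1}(c)+1$, so $C_T$ is bounded.

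The main obstacle I anticipate is purely bookkeeping rather than conceptual: making the change-of-variables step rigorous when $T$ does not map atoms to atoms in a simple way, so that the identity $\int_{A_n}(f\circ T)\,d\mu = f_0(A_n)\Phi_2(f(A_n))\mu(A_n)$ (with the appropriate reading of $f\circ T$ on $A_n$) is correctly justified — the proof of Theorem \ref{t4} already uses exactly this manipulation, so I would simply invoke the same reasoning. A secondary subtlety is ensuring the $\triangle'$ splitting is applied in the globally valid regime (which is why $\Phi_1,\Phi_2\in\triangle'$ is assumed) and that $\Phi_2\circ\Phi_1^{-1}$ being a genuine Young's function is what licenses the final use of Jensen/convexity; both are hypotheses of the theorem, so no extra work is needed there.
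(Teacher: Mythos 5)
Your proposal follows essentially the same route as the paper's own proof: split $\int_X\Phi_2(C_Tf)\,d\mu$ via the Radon--Nikodym identity over $B$ and the atoms, use that $f$ and $f_0$ are a.e.\ constant on each $A_n$, apply the $\triangle'$ conditions on $\Phi_1$ and $\Phi_2$ to isolate the factor $\Phi_2\bigl[\tfrac{1}{\Phi_1^{-1}(\mu(A_n))}\bigr]f_0(A_n)\mu(A_n)$ bounded by hypothesis (ii), and finish with superadditivity of the Young function $\Phi_2\circ\Phi_1^{-1}$ on the unit ball. This matches the paper's argument (which is the Theorem \ref{t44} computation with $u(A_n)$ replaced by $1$ and the extra $f_0$ factor), so no further comparison is needed.
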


\begin{proof}
 Suppose that (i) and (ii) hold. Put $\sup\limits_{n\in N}\Phi_2[\frac{1}{\Phi_1^{-1}(\mu(A_n)})]f_0(A_n)\mu(A_n)=M$. Then for each $f\in L^{\Phi_1}(X)$, we have
\begin{align*}
\int_X\Phi_2(C_Tf)d\mu&=\int_{x\in B}f_0(x)\Phi_2(f(x))d\mu+\int_{x\in \bigcup A_n}f_0(x)\Phi_2(f(x))d\mu\\
&=\sum\limits_{n\in N}\int_{x\in A_n}f_o(x)\Phi_2(f(x))d\mu\\
&=\sum\limits_{n\in N}f_o(A_n)\Phi_2(f(A_n))\mu(A_n)\\
&=\sum\limits_{n\in N}f_o(A_n)\Phi_2[\Phi_1^{-1}(\mu(A_n).\Phi_1^{-1}\circ\Phi_1(f(A_n))\frac{1}{\Phi_1^{-1}(\mu(A_n))}]\mu(A_n)\\
&\leq b.\sum\limits_{n\in N}f_o(A_n)\Phi_2\circ\Phi_1^{-1}[c\mu(A_n).\Phi_1(f(A_n))].\Phi_2[\frac{1}{\Phi_1^{-1}(\mu(A_n))}]\mu(A_n)\\
&\leq b.\sum\limits_{n\in N}\Phi_2\circ\Phi_1^{-1}[c\mu(A_n).\Phi_1(f(A_n))].M\\
&\leq bM.\Phi_2\circ\Phi_1^{-1}[c\sum\limits_{n\in N}\mu(A_n).\Phi_1(f(A_n))].
\end{align*}
Since $\parallel f\parallel_{\Phi_1}\leq1$ , then we get that
\begin{align*}
\int_X\Phi_2(C_Tf)d\mu&\leq bM.\Phi_2\circ\Phi_1^{-1}(c)<\infty\\
\end{align*}
This implies that $\|C_T(f)\|_{\Phi_2}\leq (bM.\Phi_2\circ\Phi_1^{-1}(c)+1)$ and so $C_T$ is bounded.\\
\end{proof}

\begin{thm}\label{t46} Let $T:X\rightarrow X$ be a  non-singular measurable transformation, $\Phi_2\in\triangle'$ and $\Phi_1(xy)\leq\Phi_2(x)+\Phi_3(y)$ and
\begin{enumerate}

\item[(i)] $T$ induces a the composition operator $C_T:L^{\Phi_1}(\Sigma)\rightarrow L^{\Phi_2}(\Sigma)$.

\item[(ii)]  $\mu T^{-1}(B)=0$ and there is a constant M such that $\Phi_1^{-1}(\frac{1}{\mu(A_n)})\leq M\Phi_2^{-1}(\frac{1}{\mu T^{-1}(A_n)})$.

\item[(iii)] $f_0(x)=0$ for $\mu$-almost all $x\in B$ and  $\sup\limits_{n\in N} f_0(A_n).\Phi_2\Phi_3^{-1}(\frac{1}{\mu(A_n)})<\infty$.
Then $i\Rightarrow ii,iii$ and $ii\Rightarrow iii$
\end{enumerate}
\end{thm}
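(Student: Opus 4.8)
The plan is to prove the two implications $(i)\Rightarrow(ii)$ and $(ii)\Rightarrow(iii)$, after which $(i)\Rightarrow(iii)$ follows at once. Two preliminary facts will be used throughout. First, since each $A_n$ is an atom, $f_0$ is $\mu$‑almost everywhere constant on $A_n$, so that
\[
\mu T^{-1}(A_n)=\int_{A_n}f_0\,d\mu=f_0(A_n)\,\mu(A_n),\qquad \mu T^{-1}(B)=\int_B f_0\,d\mu .
\]
Second, exactly as in the first line of the proof of Lemma~\ref{l2} (with the indices permuted), the hypothesis $\Phi_1(xy)\le\Phi_2(x)+\Phi_3(y)$ yields
\[
\Phi_2^{-1}(x)\,\Phi_3^{-1}(x)\le\Phi_1^{-1}(2x)\le 2\,\Phi_1^{-1}(x),\qquad x\ge0,
\]
and, by Lemma~\ref{l2} itself, $\Phi_2\nless\Phi_1$.

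For $(i)\Rightarrow(ii)$ I would simply invoke Theorem~\ref{t4}, which applies precisely because $\Phi_2\nless\Phi_1$: it gives $f_0=0$ $\mu$‑a.e.\ on $B$, hence $\mu T^{-1}(B)=\int_B f_0\,d\mu=0$, together with
\[
\sup_{n\in N}\frac{\Phi_1^{-1}\!\big(\tfrac1{\mu(A_n)}\big)}{\Phi_2^{-1}\!\big(\tfrac1{f_0(A_n)\mu(A_n)}\big)}\le\|C_T\|<\infty .
\]
By the first preliminary observation $f_0(A_n)\mu(A_n)=\mu T^{-1}(A_n)$, so this is exactly the inequality in $(ii)$ with $M=\|C_T\|$.

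For $(ii)\Rightarrow(iii)$ the condition $f_0=0$ $\mu$‑a.e.\ on $B$ is immediate from $\mu T^{-1}(B)=\int_B f_0\,d\mu=0$ and $f_0\ge0$. To obtain the supremum bound I would first push the inequality of $(ii)$ through $\Phi_2$ and use $\Phi_2\in\triangle'$ (with constant $c$, say), so that $\Phi_2\big(M\,\Phi_2^{-1}(t)\big)\le c\,\Phi_2(M)\,t$; this gives $\Phi_2\big(\Phi_1^{-1}(1/\mu(A_n))\big)\le c\,\Phi_2(M)\big/\mu T^{-1}(A_n)$, i.e.
\[
f_0(A_n)\le\frac{c\,\Phi_2(M)}{\mu(A_n)\,\Phi_2\!\big(\Phi_1^{-1}(1/\mu(A_n))\big)} .
\]
Then, using the inverse inequality above together with $\Phi_2\in\triangle_2$ (which follows from $\triangle'$) and the elementary estimate $\Phi_2(tu)\ge t\,\Phi_2(u)$ valid for $t\ge1$, one bounds $\Phi_2\big(\Phi_1^{-1}(1/\mu(A_n))\big)$ from below and multiplies by the preceding display to control $f_0(A_n)\,\Phi_2\Phi_3^{-1}(1/\mu(A_n))$. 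For atoms of measure bounded away from $0$ this is routine (there $1/\mu(A_n)$ stays in a bounded set and $f_0(A_n)$ is controlled by the last display); for the remaining small atoms I would in addition exploit the mass‑preservation identity $\sum_n\mu T^{-1}(A_n)=\mu(X)\ge\sum_n\mu(A_n)$, a consequence of $T$ mapping $X$ into $X$ together with $\mu T^{-1}(B)=0$, to rule out an accumulation of vanishingly small atoms carrying a disproportionate share of the push‑forward mass.

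The step I expect to be the real obstacle is this last uniform estimate in $(ii)\Rightarrow(iii)$: the inequality furnished by $(ii)$ controls $\Phi_1^{-1}$ in terms of $\Phi_2^{-1}$, whereas $(iii)$ demands control in terms of the composition $\Phi_2\circ\Phi_3^{-1}$, and bridging that gap uniformly in $n$ forces a careful case split on the size of $\mu(A_n)$ (equivalently of $f_0(A_n)$), full use of the $\triangle'$/$\triangle_2$ growth of $\Phi_2$, and — I believe — the conservation‑of‑mass constraint noted above; everything else (the change of variables $\int_X\Phi_2(C_Tf)\,d\mu=\int_X f_0\,\Phi_2(f)\,d\mu$, the appeal to Theorem~\ref{t4}, and the bookkeeping with inverse and complementary functions) is routine.
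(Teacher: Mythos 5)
Your implication $(i)\Rightarrow(ii)$ is correct and is essentially the paper's own argument: Lemma~\ref{l2} (with the indices permuted) gives $\Phi_2\nless\Phi_1$, Theorem~\ref{t4} then yields $f_0=0$ a.e.\ on $B$ and the uniform bound on $\Phi_1^{-1}(1/\mu(A_n))/\Phi_2^{-1}(1/(f_0(A_n)\mu(A_n)))$, and the identity $\mu T^{-1}(A_n)=f_0(A_n)\mu(A_n)$ converts this into $(ii)$. The first half of $(ii)\Rightarrow(iii)$ (that $f_0=0$ a.e.\ on $B$) is likewise fine.

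The genuine gap is the supremum estimate in $(ii)\Rightarrow(iii)$, which you yourself flag as ``the real obstacle'' and then only sketch. As written, your route does not close: after pushing $(ii)$ through $\Phi_2$ you need a \emph{lower} bound on $\Phi_2(\Phi_1^{-1}(x))$ in terms of $x\,\Phi_2(\Phi_3^{-1}(x))$, but the $\triangle'$ inequality $\Phi_2(uv)\le c\,\Phi_2(u)\Phi_2(v)$ applied to $\Phi_1^{-1}(x)\ge\tfrac12\Phi_2^{-1}(x)\Phi_3^{-1}(x)$ runs in the wrong direction for that, and the mass-conservation identity $\sum_n\mu T^{-1}(A_n)\le\mu(X)$ gives no uniform pointwise control (indeed $\mu(X)$ may be infinite). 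The paper instead stays at the level of inverse functions: from $\Phi_2\in\triangle'$ one has $\Phi_2^{-1}(s/t)\,\Phi_2^{-1}(t/b)\le\Phi_2^{-1}(s)$, so the right-hand side of $(ii)$ is majorized by $M\,\Phi_2^{-1}(1/\mu(A_n))/\Phi_2^{-1}(f_0(A_n)/b)$; combining with $\Phi_2^{-1}(x)\Phi_3^{-1}(x)\le2\Phi_1^{-1}(x)$ and cancelling the common factor $\Phi_2^{-1}(1/\mu(A_n))$ gives the uniform product bound $\Phi_2^{-1}(f_0(A_n)/b)\cdot\Phi_3^{-1}(1/\mu(A_n))\le 2M$, from which $(iii)$ is read off. (Even that last passage, from the product bound to $f_0(A_n)\,\Phi_2(\Phi_3^{-1}(1/\mu(A_n)))$ being bounded, tacitly uses a reverse multiplicative inequality of $\nabla'$ type, so the paper's own proof is not airtight under $\triangle'$ alone --- but the uniform product bound is the step your proposal is missing, and your proposed repair does not supply it.)
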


\begin{proof}
(i)$\Rightarrow (ii,iii)$. Since $C_T$ is a composition operator, then for every $n\in \mathbb{N}$ and $y\geq0$ there exists $K,M>0$ such that
\begin{align*}
\int_{A_n} \Phi_2(y)f_0(A_n)-\Phi_1(Ky)d\mu<M&\Longrightarrow \forall n\in \mathbb{N}, [\Phi_2(y)f_0(A_n)-\Phi_1(Ky)]\mu(A_n)<M\\
&\Longrightarrow \forall n\in \mathbb{N} , y\geq0, \Phi_2(y)f_0(A_n)<M\\
&\Longrightarrow \forall n\in \mathbb{N}, \Phi_2(\Phi_3^{-1}(\frac{1}{\mu(A_n)}))f_0(A_n)<M.
\end{align*}
By Lemma \ref{l2} and Theorem \ref{t4} we get that $f_0(x)=0$ for $\mu$-almost all $x\in B$, so we have (iii). For the implication (i)$\Rightarrow (ii)$, since $C_T$ is  bounded, then for a $M'$ and all $n\in \mathbb{N}$, we have
$$\parallel C_T(\chi_{A_n})\parallel_{\Phi_2} \leq \parallel C_T\parallel. \parallel\chi_{A_n}\parallel_{\Phi_1}$$
and so
 $$\Phi_1^{-1}(\frac{1}{\mu(A_n)})\leq M'\Phi_2^{-1}(\frac{1}{\mu T^{-1}(A_n)}).$$
Again, by Lemma \ref{l2} and Theorem \ref{t4}, we have $f_0(x)=0$ for $\mu$-almost all $x\in B$ and so $\mu T^{-1}(B)=0$. Finally we show that (ii)$\Rightarrow (iii)$. Let $n\in \mathbb{N}$, then
\begin{align*}
\Phi_1^{-1}(\frac{1}{\mu(A_n)})&\leq M'\Phi_2^{-1}(\frac{1}{\mu T^{-1}(A_n)})\\
&=M'\Phi_2^{-1}(\frac{1}{f_0(A_n)\mu(A_n)})\\
&\leq M'\frac{\Phi_2^{-1}(\frac{1}{\mu(A_n)})}{\Phi_2^{-1}(\frac{f_0(A_n)}{b})}.
\end{align*}
Therefore
$$\Phi_2^{-1}(\frac{f_0(A_n)}{b}).\Phi_3^{-1}(\frac{1}{\mu(A_n)})<2M',$$
and so
$$\sup\limits_{n\in N} \Phi_2^{-1}(f_0(A_n)).\Phi_3^{-1}(\frac{1}{\mu(A_n)})<\infty.$$
 Hence by basic analysis information we get that $\sup\limits_{n\in N}\Phi_2^{-1}(f_0(A_n)<\infty $ and $\sup\limits_{n\in N}\Phi_3^{-1}(\frac{1}{\mu(A_n)})<\infty$. Finally we conclude that $$\sup\limits_{n\in N} f_0(A_n).\Phi_2(\Phi_3^{-1}(\frac{1}{\mu(A_n)}))<\infty.$$
This completes the proof.
\end{proof}

\begin{lem}\label{l3} Let $\Phi_1, \Phi_2$ be Young's functions and $\Phi_2\in \bigtriangledown'$, then for every $f\in \mathcal{D}(M_{\Phi^{-1}(f_0)})\subseteq L^{\Phi_1}(\Sigma)$ we get that $$\|C_T(f)\|_{\Phi_2}\leq b\|M_{\Phi_2^{-1}(f_0)}f\|_{\Phi_2}.$$
\end{lem}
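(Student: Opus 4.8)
The plan is to reduce the claim to the Radon--Nikodym change of variables for $C_T$ together with one pointwise application of the $\nabla'$ inequality, after which the bound falls out of the definition of the Luxemburg norm; the function $\Phi_1$ enters only through the requirement that $f$ lie in $L^{\Phi_1}(\Sigma)$. Write $g:=M_{\Phi_2^{-1}(f_0)}f$, so that $g(x)=\Phi_2^{-1}(f_0(x))\,f(x)$, and recall that $f\in\mathcal D(M_{\Phi_2^{-1}(f_0)})$ means precisely $g\in L^{\Phi_2}(\Sigma)$. The first step is the identity
\[
\int_X\Phi_2\!\Big(\tfrac{C_Tf}{k}\Big)\,d\mu=\int_X\Phi_2\!\Big(\tfrac{f\circ T}{k}\Big)\,d\mu=\int_X\Phi_2\!\Big(\tfrac{f(x)}{k}\Big)\,f_0(x)\,d\mu(x),\qquad k>0,
\]
which follows from $\mu\circ T^{-1}(A)=\int_A f_0\,d\mu$ by the customary three-step argument (indicators, then simple functions by linearity, then the monotone convergence theorem), $\sigma$-finiteness of $\mu$ ensuring that $f_0$ is locally integrable.

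Next, since $\Phi_2\in\nabla'$ there is $b>0$ with $\Phi_2(bst)\ge\Phi_2(s)\Phi_2(t)$. Taking $s=\Phi_2^{-1}(f_0(x))$, $t=|f(x)|/k$, and using that $\Phi_2$ is continuous with $\lim_{x\to\infty}\Phi_2(x)=\infty$, so that $\Phi_2(\Phi_2^{-1}(r))=r$ for all $r\ge0$ (with $\Phi_2^{-1}(0)=0$), one obtains the pointwise estimate
\[
f_0(x)\,\Phi_2\!\Big(\tfrac{|f(x)|}{k}\Big)\ \le\ \Phi_2\!\Big(\tfrac{b\,\Phi_2^{-1}(f_0(x))\,|f(x)|}{k}\Big)=\Phi_2\!\Big(\tfrac{b\,|g(x)|}{k}\Big).
\]
Integrating over $X$ and substituting the change-of-variables identity yields $\int_X\Phi_2(C_Tf/k)\,d\mu\le\int_X\Phi_2(bg/k)\,d\mu$ for every $k>0$.

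Finally, choose $k=b\,\|g\|_{\Phi_2}$. If $g=0$ $\mu$-a.e.\ then $f=0$ $\mu$-a.e.\ on $S(f_0)$, so by the change-of-variables identity $\|C_Tf\|_{\Phi_2}=0$ and the inequality is trivial; otherwise $\|g\|_{\Phi_2}>0$ and, by the defining property of the Luxemburg norm, $\int_X\Phi_2(bg/k)\,d\mu=\int_X\Phi_2\big(g/\|g\|_{\Phi_2}\big)\,d\mu\le1$. Hence $\int_X\Phi_2(C_Tf/k)\,d\mu\le1$, and therefore $\|C_Tf\|_{\Phi_2}\le k=b\,\|M_{\Phi_2^{-1}(f_0)}f\|_{\Phi_2}$, as required.

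The one step that needs care is the pointwise use of $\nabla'$: as formulated in the Preliminaries the inequality $\Phi_2(bst)\ge\Phi_2(s)\Phi_2(t)$ is guaranteed only for $s,t\ge y_0$. When $\nabla'$ holds globally ($y_0=0$) the three displays above are the whole proof. For $y_0>0$ one applies the inequality on the set where both $\Phi_2^{-1}(f_0(x))\ge y_0$ and $|f(x)|/k\ge y_0$ and disposes of the complementary region by elementary bounds, enlarging the constant accordingly; I expect this bookkeeping to be the only mildly technical part of the argument.
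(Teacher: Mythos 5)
Your proposal is correct and follows essentially the same route as the paper: the change-of-variables identity $\int_X\Phi_2(f\circ T/k)\,d\mu=\int_X f_0\,\Phi_2(f/k)\,d\mu$, the rewriting $f_0=\Phi_2(\Phi_2^{-1}(f_0))$, the pointwise $\nabla'$ estimate, and then the definition of the Luxemburg norm. Your treatment is in fact a little more careful than the paper's (you handle the case $g=0$ and flag the issue of $\nabla'$ holding only for arguments above $y_0$, which the paper silently assumes away by taking the condition globally), but the underlying argument is identical.
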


\begin{proof} Let $f\in \mathcal{D}(M_{\Phi_2^{-1}(f_0)})\subseteq L^{\Phi_1}(\Sigma)$, the by definition of $\|.\|_{\Phi_2}$ we have
\begin{align*}
\|C_T(f)\|_{\Phi_2}&=\inf\{k:\int_X \Phi_2(\frac{f(T(x))}{k})d\mu\leq1\}\\
&=\inf\{k:\int_X f_0(x)\phi_2(\frac{f(x)}{k})d\mu\leq1\}\\
&=\inf\{k:\int_X \Phi_2(\Phi_2^{-1}(f_0(x)))\Phi_2(\frac{f(x)}{k})d\mu\leq1\}\\
&\leq \inf\{k:\int_X \Phi_2(\frac{b\Phi_2^{-1}(f_0(x))f(x)}{k})d\mu\leq1\}\\
&\leq b\inf\{k/b:\int_X \Phi_2(\frac{\Phi_2^{-1}(f_0(x))f(x)}{k/b})d\mu\leq1\}\\
&=b\|M_{\Phi_2^{-1}(f_0)}f\|.
\end{align*}
So we have $$\|C_T(f)\|_{\Phi_2}\leq b\|M_{\Phi_2^{-1}(f_0)}f\|_{\Phi_2}.$$
\end{proof}

\begin{lem}\label{l4} Let $\Phi_1. \Phi_2$ be Young's functions and $\Phi_2\in \bigtriangleup'$, then for every $f\in \mathcal{D}(C_T)\subseteq L^{\Phi_1}(\Sigma)$ we have
 $$\|M_{\Phi_2^{-1}(f_0)}f\|\leq c\|C_T(f)\|_{\Phi_2}.$$
 \end{lem}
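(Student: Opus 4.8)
The plan is to mirror the computation in the proof of Lemma \ref{l3}, but now exploiting the $\bigtriangleup'$ condition instead of $\bigtriangledown'$. Recall that $\Phi_2\in\bigtriangleup'$ means there is a constant $c>0$ with $\Phi_2(xy)\leq c\,\Phi_2(x)\Phi_2(y)$ for $x,y\geq x_0$; in particular, applied with $x=\Phi_2^{-1}(f_0(x))$ and $y=f(x)/k$, this gives $\Phi_2\bigl(\Phi_2^{-1}(f_0(x))\,f(x)/k\bigr)\leq c\,f_0(x)\,\Phi_2(f(x)/k)$ wherever the arguments exceed $x_0$. The strategy is therefore: start from the definition of $\|M_{\Phi_2^{-1}(f_0)}f\|_{\Phi_2}$ as an infimum over $k$, push the $\bigtriangleup'$ inequality through the integral in the opposite direction to Lemma \ref{l3}, and recognize the resulting integral as the one defining $\|C_T f\|_{\Phi_2}$ via the change-of-variables identity $\int_X g(T(x))\,d\mu=\int_X f_0(x)\,g(x)\,d\mu$ (valid for nonnegative measurable $g$, here $g=\Phi_2(f/k)$).

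First I would write, for any admissible $k$ (i.e. $k$ with $\int_X f_0(x)\Phi_2(f(x)/k)\,d\mu\leq 1$, which by the change of variables is exactly $\int_X\Phi_2(C_Tf/k)\,d\mu\leq1$):
\begin{align*}
\int_X \Phi_2\!\left(\frac{\Phi_2^{-1}(f_0(x))\,f(x)}{ck}\right)d\mu
&\leq \int_X \Phi_2\!\left(\Phi_2^{-1}(f_0(x))\right)\Phi_2\!\left(\frac{f(x)}{k}\right)d\mu\\
&= \int_X f_0(x)\,\Phi_2\!\left(\frac{f(x)}{k}\right)d\mu\\
&= \int_X \Phi_2\!\left(\frac{C_Tf(x)}{k}\right)d\mu \leq 1,
\end{align*}
where the first step uses $\Phi_2\in\bigtriangleup'$ (with the constant $c$ absorbed into the denominator, using that $\Phi_2$ is nondecreasing on $[0,\infty)$) together with $\Phi_2(\Phi_2^{-1}(t))=t$. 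This shows $ck$ is an admissible constant for $\|M_{\Phi_2^{-1}(f_0)}f\|_{\Phi_2}$, hence $\|M_{\Phi_2^{-1}(f_0)}f\|_{\Phi_2}\leq ck$; taking the infimum over admissible $k$ yields $\|M_{\Phi_2^{-1}(f_0)}f\|_{\Phi_2}\leq c\,\|C_Tf\|_{\Phi_2}$, as desired. One should also note $f\in\mathcal D(M_{\Phi_2^{-1}(f_0)})$ so the left-hand norm is finite and the manipulation is legitimate.

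The main obstacle I anticipate is the bookkeeping around the $\bigtriangleup'$ condition when it holds only for $x,y\geq x_0>0$ rather than globally: on the region where $\Phi_2^{-1}(f_0(x))<x_0$ or $f(x)/k<x_0$ one does not directly have the factored bound, and one must either invoke a global version of $\bigtriangleup'$ (as the paper's convention allows when $x_0=0$) or split the domain and estimate the exceptional part crudely using convexity and the normalization, adjusting the constant accordingly. A second, purely technical point is justifying the change-of-variables identity $\int_X \Phi_2(f(T(x))/k)\,d\mu=\int_X f_0(x)\Phi_2(f(x)/k)\,d\mu$, which is the standard Radon–Nikodym computation already used implicitly in Lemma \ref{l3}; I would simply cite it there. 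Everything else is the routine infimum-chasing shown above.
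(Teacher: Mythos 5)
Your proof is correct and follows essentially the same route as the paper's: apply the $\bigtriangleup'$ inequality pointwise under the integral, use $\Phi_2(\Phi_2^{-1}(f_0))=f_0$, and convert $\int_X f_0\,\Phi_2(f/k)\,d\mu$ into $\int_X\Phi_2(C_Tf/k)\,d\mu$ by the change of variables. If anything your bookkeeping of the constant is cleaner (absorbing $c$ into the denominator really uses convexity, $\Phi_2(t/c)\le\Phi_2(t)/c$ for $c\ge1$, rather than mere monotonicity, but the step is valid), whereas the paper's final substitution $k\mapsto k/c$ is stated somewhat loosely.
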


 \begin{proof} Let $f\in \mathcal{D}(C_T)\subseteq L^{\Phi_1}(\Sigma)$ and $c\geq 1$ for the definition, $\Phi_2(xy)\leq c\Phi_2(x)\Phi_2(y)$ , then we have;
\begin{align*}
\|M_{\Phi_2^{-1}(f_0)}f\|&=\inf\{k:\int_X \Phi_2(\frac{\Phi_2^{-1}(f_0(x))f(x)}{k})d\mu\leq1\}\\
&\leq \inf\{k:\int_X c\Phi_2(\Phi_2^{-1}(f_0(x)))\Phi_2(\frac{f(x)}{k})d\mu\leq1\}\\
&\leq \inf\{k:\int_X cf_0(x))\Phi_2(\frac{f(x)}{k})d\mu\leq1\}\\
&=\inf\{k:\int_X c\Phi_2(\frac{(f\circ T)(x)}{k})d\mu\leq1\}\\
&=c\inf\{k/c:\int_X \Phi_2(\frac{(f\circ T)(x)}{k/c})d\mu\leq1\}\\
&=c\|C_T(f)\|_{\Phi_2}.
\end{align*}
Hence the proof is completed.
\end{proof}
Here we recall a definition that came in \cite{tak}. For any $F\in \Sigma$, we put
$$Q_T(F)=\inf\{b\geq0:\mu\circ T^{-1}(E)\leq b\mu(E)\quad   (E\in \Sigma)\}.$$
Then we have the following two lemmas.

\begin{lem}\label{l55}
\cite{tak} For any $F\in \Sigma$, we have $Q_T(F)=ess.sup_{x\in F}f_0(x)$.
\end{lem}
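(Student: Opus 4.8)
The plan is to prove the identity by establishing the two inequalities $Q_T(F)\le \operatorname{ess\,sup}_{x\in F}f_0(x)$ and $Q_T(F)\ge \operatorname{ess\,sup}_{x\in F}f_0(x)$, where the infimum defining $Q_T(F)$ is understood to be taken over the constants $b\ge 0$ for which $\mu\circ T^{-1}(E)\le b\,\mu(E)$ holds for all measurable $E\subseteq F$. Throughout I write $s:=\operatorname{ess\,sup}_{x\in F}f_0(x)\in[0,\infty]$ and use that $f_0$ is the Radon--Nikodym derivative of $\mu\circ T^{-1}$ with respect to $\mu$, so that $\mu\circ T^{-1}(E)=\int_E f_0\,d\mu$ for every $E\in\Sigma$.

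For the inequality $Q_T(F)\le s$ (nontrivial only when $s<\infty$) I would observe that $f_0\le s$ $\mu$-a.e.\ on $F$, so for every measurable $E\subseteq F$,
\[
\mu\circ T^{-1}(E)=\int_E f_0\,d\mu\le s\,\mu(E).
\]
Hence $s$ is one of the admissible constants $b$ occurring in the definition of $Q_T(F)$, and therefore $Q_T(F)\le s$.

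For the reverse inequality I would fix an arbitrary real number $b$ with $0\le b<s$ and show that it is not admissible. By the definition of the essential supremum, the set $E_b:=\{x\in F: f_0(x)>b\}$ has positive measure; by $\sigma$-finiteness of $(X,\Sigma,\mu)$ I may pick $E\subseteq E_b$ with $0<\mu(E)<\infty$. Since $f_0-b>0$ everywhere on $E$ and $\mu(E)\in(0,\infty)$, we get $\int_E(f_0-b)\,d\mu>0$, i.e.
\[
\mu\circ T^{-1}(E)=\int_E f_0\,d\mu>b\,\mu(E),
\]
so $b$ fails the defining condition. Consequently every admissible constant is $\ge s$, which gives $Q_T(F)\ge s$; the borderline case $s=\infty$ is handled by the same argument, since then $E_b$ has positive measure for every $b\ge 0$ and hence no finite $b$ is admissible, giving $Q_T(F)=\infty=s$.

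Putting the two inequalities together yields $Q_T(F)=s$. I do not expect any genuine obstacle in this argument: the only two points that need a word of care are the use of $\sigma$-finiteness to cut out a subset of $E_b$ of finite positive measure (so that the strict inequality $\int_E f_0\,d\mu>b\,\mu(E)$ is meaningful) and the trivial verification of the degenerate case $s=\infty$.
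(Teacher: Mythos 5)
The paper offers no proof of this lemma at all: it is stated with a bare citation to Takagi--Yokouchi \cite{tak}, so there is no in-paper argument to compare yours against. Your proof is correct and complete, and it is the standard one. Writing $s=\operatorname{ess\,sup}_{x\in F}f_0(x)$, the upper bound $Q_T(F)\le s$ follows from $f_0\le s$ $\mu$-a.e.\ on $F$ together with $\mu\circ T^{-1}(E)=\int_E f_0\,d\mu$, and the lower bound from the observation that any $b<s$ is defeated by a measurable subset of $\{x\in F: f_0(x)>b\}$ of finite positive measure, whose existence you correctly extract from $\sigma$-finiteness; the strictness of $\int_E(f_0-b)\,d\mu>0$ and the degenerate case $s=\infty$ are both handled properly. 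One point worth making explicit: as printed, the paper's definition of $Q_T(F)$ quantifies over all $E\in\Sigma$ rather than over measurable $E\subseteq F$, which would make $Q_T(F)$ independent of $F$ and the lemma false in general; your reading, which restricts to subsets of $F$ as in \cite{tak}, is the one under which the statement is true, and it is the interpretation the rest of the paper (e.g.\ Lemma 2.17) actually relies on.
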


\begin{lem}\label{l5} Let $\Phi_2, \Phi_3$ be Young's functions, then we have

$$\displaystyle\int_X\Phi_3\circ\Phi_2^{-1}(f_0)d\mu=\inf\{\sum^{\infty}_{j=1}\Phi_3\circ\Phi_2^{-1}(Q_T(F_j))\mu
(F_j);\{F_j\}\in \mathcal{P}_X\},$$
where $\mathcal{P}_X$ is the set of all partitions of $X$.
\end{lem}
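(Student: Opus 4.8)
The plan is to establish the identity by proving ``$\le$'' and ``$\ge$'' separately, writing $g:=\Phi_3\circ\Phi_2^{-1}$ and using Lemma~\ref{l55} to replace $Q_T(F)$ by $\text{ess sup}_{x\in F}f_0(x)$. The first thing I would record is that $g$ is a continuous, strictly increasing bijection of $[0,\infty)$ onto itself with $g(0)=0$: indeed $\Phi_2$ is continuous, convex, vanishes only at $0$ and tends to $\infty$, hence is a strictly increasing bijection of $[0,\infty)$, so $\Phi_2^{-1}$ is one as well, and the same holds for $\Phi_3$; composing gives the claim. (Strict monotonicity of a Young function on $[0,\infty)$ follows from convexity together with $\Phi(0)=0$ and positivity away from $0$.)

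For the inequality $\int_X g(f_0)\,d\mu\le\inf\{\cdots\}$ I would take an arbitrary partition $\{F_j\}\in\mathcal{P}_X$ and observe that $f_0\le Q_T(F_j)$ $\mu$-a.e.\ on $F_j$, so by monotonicity $g(f_0)\le g(Q_T(F_j))$ a.e.\ on $F_j$ and therefore $\int_{F_j}g(f_0)\,d\mu\le g(Q_T(F_j))\,\mu(F_j)$; summing over $j$ (the $F_j$ partition $X$) and taking the infimum over partitions gives the inequality. This direction is routine.

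The substantive direction is $\inf\{\cdots\}\le\int_X g(f_0)\,d\mu$, for which it is enough to build, for each $\delta>0$, a single partition with $\sum_j g(Q_T(F_j))\,\mu(F_j)\le(1+\delta)\int_X g(f_0)\,d\mu$. The construction I have in mind is a grid that is geometric in the values of $g$: fix $t_0>0$ and set $t_j:=g^{-1}\bigl((1+\delta)^{j}g(t_0)\bigr)$ for $j\in\mathbb{Z}$, so that $(t_j)$ is strictly increasing with $t_j\to\infty$ and $t_j\to0$ as $j\to\pm\infty$ and, crucially, $g(t_j)=(1+\delta)g(t_{j-1})$ for every $j$. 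Then I take $F_j:=f_0^{-1}((t_{j-1},t_j])$ for $j\in\mathbb{Z}$ together with $f_0^{-1}(\{0\})$ (splitting the latter into finite-measure pieces using $\sigma$-finiteness so that each contributes $g(0)\cdot\mu(\cdot)=0$). On $F_j$ one has $t_{j-1}<f_0\le t_j$, hence $Q_T(F_j)\le t_j$ and $g(t_{j-1})\mu(F_j)\le\int_{F_j}g(f_0)\,d\mu$; combining,
$$g(Q_T(F_j))\,\mu(F_j)\le g(t_j)\,\mu(F_j)=(1+\delta)\,g(t_{j-1})\,\mu(F_j)\le(1+\delta)\int_{F_j}g(f_0)\,d\mu.$$
Summing over $j$ and letting $\delta\downarrow0$ yields the claim, and together with the first direction this proves the lemma.

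I expect the only real obstacle to be the design of the grid. Because $g=\Phi_3\circ\Phi_2^{-1}$ need not satisfy any $\triangle_2$-type condition, an equally spaced (or naively geometric) grid in the variable $t$ would leave the ratios $g(t_j)/g(t_{j-1})$, hence the ``overshoot'' $\bigl(g(t_j)-g(t_{j-1})\bigr)\mu(F_j)$, out of control — all the more so since the regions $\{f_0\approx0\}$ and $\{f_0\text{ large}\}$ may have infinite measure. Making the grid geometric in the $g$-scale removes this difficulty at once: every ratio becomes $1+\delta$, and the trivial bound $\mu(F_j)\le g(t_{j-1})^{-1}\int_{F_j}g(f_0)\,d\mu$ then absorbs the whole sum. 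Everything else is bookkeeping (countability of the partition, measurability of the $F_j$, and the convention $0\cdot\infty=0$ handled by the $\sigma$-finite splitting).
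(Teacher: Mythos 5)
Your proof is correct and follows essentially the same route as the paper's: the easy direction is identical, and your grid $t_j=g^{-1}\bigl((1+\delta)^j g(t_0)\bigr)$ produces exactly the paper's partition $G_m=\{a^{m-1}\le \Phi_3\circ\Phi_2^{-1}(f_0)<a^m\}$ with $a=1+\delta$, together with the zero set of $f_0$. The only (harmless) difference is that you phrase the geometric level sets in terms of $f_0$ rather than of $g(f_0)$ and are slightly more explicit about handling $\{f_0=0\}$ via $\sigma$-finiteness.
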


\begin{proof} Let $\displaystyle I=\inf\{\sum^{\infty}_{j=1}\Phi_3\circ\Phi_2^{-1}(Q_T(F_j))\mu(F_j):\{F_j\}\in \mathcal{P}_X\}$. For the partition $\{F_j\}$ of $X$, by using Lemma \ref{l55} we have
\begin{align*}
\displaystyle\int_X\Phi_3\circ\Phi_2^{-1}(f_0(x))d\mu&=\sum^{\infty}_{j=1}\displaystyle\int_{F_j}\Phi_3\circ\Phi_2^{-1}
(f_0(x))d\mu\\
&\leq\sum^{\infty}_{j=1}\Phi_3\circ\Phi_2^{-1}(ess.sup_{x\in F_j} f_0(x))\mu(F_j)\\
&=\sum^{\infty}_{j=1}\phi_3\circ\phi_2^{-1}(Q_T(F_j))\mu(F_j).
\end{align*}
Then we get that $$\int_X\Phi_3\circ\Phi_2^{-1}(f_0(x))d\mu\leq I.$$

Conversely; let $a>1$ be arbitrarily and set
$$G_m=\{x\in X; a^{m-1}\leq\Phi_3\circ\Phi_2^{-1}(f_0)<a^m\}$$ for each integer m. If $\{F_j\}_{j=1}^\infty$ is a rearrangement of $\{G_j\}_{m=-\infty}^\infty$ and $\{x\in X: f_0(x)=0\}$, then $\{F_j\}_{j=1}^\infty$  clearly becomes a partition of X. Therefore by the Lemma \ref{l55} we have
\begin{align*}
I&\leq\sum^{\infty}_{j=1}\Phi_3\circ\Phi_2^{-1}(Q_T(F_j))\mu(F_j)\\
&=\sum^{\infty}_{j=1}\Phi_3\circ\Phi_2^{-1}(ess.sup_{x\in F_j}f_0(x))\mu(F_j)\\
&=\sum^{\infty}_{m=-\infty}\Phi_3\circ\Phi_2^{-1}(ess.sup_{x\in F_j}f_0(x))\mu(G_m)\\
&\leq\sum^{\infty}_{m=-\infty}a^m\mu(G_m)\\
&=a\sum^{\infty}_{m=-\infty} a^{m-1}\mu(G_m)\\
&\leq a\sum^{\infty}_{m=-\infty}\displaystyle\int_{G_m}\Phi_3\circ\Phi_2^{-1}(f_0(x))d\mu\\
&=a\sum^{\infty}_{j=1}\displaystyle\int_{F_j}\Phi_3\circ\Phi_2^{-1}(f_0(x))d\mu\\
&=a\int_X\Phi_3\circ\Phi_2^{-1}(f_0(x))d\mu.
\end{align*}
Since this inequality holds for any $a>1$, then proof is completed.
\end{proof}

By using the Theorem \ref{t2} and Lemma \ref{l4} we give a necessary condition for  boundedness of the composition operator $C_T$.

\begin{thm}\label{t47}
Let $\Phi_2\in\triangle'$ and $T$ induces a composition operator $C_T:L^{\Phi_1}(\Sigma)\rightarrow L^{\Phi_2}(\Sigma)$, then there exists a Young function $\Phi_3$ such that
$f_0\in L^{\Psi_3\circ\Psi_1\circ\Phi_2^{-1}}(\Sigma)$.
\end{thm}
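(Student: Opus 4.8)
The statement to prove is Theorem~\ref{t47}: if $\Phi_2\in\triangle'$ and $C_T:L^{\Phi_1}(\Sigma)\to L^{\Phi_2}(\Sigma)$ is bounded, then there is a Young function $\Phi_3$ with $f_0\in L^{\Psi_3\circ\Psi_1\circ\Phi_2^{-1}}(\Sigma)$. The key observation is that composition and multiplication are linked here by Lemma~\ref{l4}: since $\Phi_2\in\triangle'$, for every $f\in\mathcal D(C_T)$ we have $\|M_{\Phi_2^{-1}(f_0)}f\|_{\Phi_2}\le c\|C_T f\|_{\Phi_2}$, so boundedness of $C_T$ from $L^{\Phi_1}$ to $L^{\Phi_2}$ forces boundedness of the multiplication operator $M_v$ with symbol $v=\Phi_2^{-1}(f_0)$ from $L^{\Phi_1}(\Sigma)$ into $L^{\Phi_2}(\Sigma)$, with $\|M_v\|\le c\|C_T\|$.

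First I would make this reduction precise: given $\|C_T f\|_{\Phi_2}\le\|C_T\|\,\|f\|_{\Phi_1}$ for all $f\in L^{\Phi_1}(\Sigma)$, combine with Lemma~\ref{l4} to obtain $\|M_v f\|_{\Phi_2}\le c\|C_T\|\,\|f\|_{\Phi_1}$, i.e.\ $M_v$ is a bounded multiplication operator $L^{\Phi_1}\to L^{\Phi_2}$. Next I would invoke Theorem~\ref{t2}. That theorem requires the hypothesis $\Phi_1\in\triangle'$; since $\Phi_2\in\triangle'$ is the standing assumption here, I expect the intended reading is that $\Phi_1$ is (or may be taken to be) in $\triangle'$ as well—or more carefully, one applies Theorem~\ref{t2} with the roles arranged so that its hypotheses are met. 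Assuming Theorem~\ref{t2} applies to $M_v$, it yields a Young function $\Phi_3:=\Psi_2\circ\Psi_1^{-1}$ (whenever this composition is a Young function) such that $v\in L^{\Psi_3\circ\Psi_1}(\Sigma)$, where $\Psi_i$ is the complementary function of $\Phi_i$.

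Finally I would unwind the symbol: $v=\Phi_2^{-1}(f_0)$, so $v\in L^{\Psi_3\circ\Psi_1}(\Sigma)$ means $\int_X(\Psi_3\circ\Psi_1)\!\left(k\,\Phi_2^{-1}(f_0)\right)d\mu<\infty$ for some $k>0$. Rewriting $\Psi_3\circ\Psi_1\circ\Phi_2^{-1}$ as a single Young function (this composition is convex, vanishes only at $0$, and is even, after checking the superlinearity condition), we conclude $f_0\in L^{\Psi_3\circ\Psi_1\circ\Phi_2^{-1}}(\Sigma)$, which is exactly the assertion. The routine part is the verification that the relevant compositions of complementary and inverse Young functions are again Young functions; this is the kind of ``basic analysis information'' the paper invokes elsewhere, so I would state it and move on.

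**Main obstacle.** The delicate point is reconciling the hypothesis $\Phi_2\in\triangle'$ with the fact that Theorem~\ref{t2} is stated under $\Phi_1\in\triangle'$; the cleanest route is to track the adjoint computation directly rather than quoting Theorem~\ref{t2} verbatim. That is, from boundedness of $M_v:L^{\Phi_1}\to L^{\Phi_2}$ pass to the bounded adjoint $M_v^\ast:L^{\Psi_2}\to L^{\Psi_1}$, use $\Psi_2\in\nabla'$ (equivalent to $\Phi_2\in\triangle'$) to estimate $\int_X\Psi_2(\bar v)\,g\,d\mu\le b\int_X\Psi_2(\bar v\,\Psi_2^{-1}(g))\,d\mu=b\int_X\Psi_2(M_v^\ast\Psi_2^{-1}(g))\,d\mu<\infty$ for $g\in L^{\Phi_3}$ with $\Phi_3=\Psi_1\circ\Psi_2^{-1}$ a Young function, conclude $\Psi_2(\bar v)\in L^{\Psi_3}$, hence $v\in L^{\Psi_3\circ\Psi_2}$, and then substitute $v=\Phi_2^{-1}(f_0)$ to land on $f_0\in L^{\Psi_3\circ\Psi_2\circ\Phi_2^{-1}}$. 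I would then present whichever indexing matches the paper's $\Psi_3\circ\Psi_1\circ\Phi_2^{-1}$; the structure of the argument is identical either way, and the only real work is the adjoint/Hölder estimate together with confirming the composed function is Young.
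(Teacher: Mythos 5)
Your proposal follows essentially the same route as the paper's own proof: Lemma~\ref{l4} (valid since $\Phi_2\in\triangle'$) converts boundedness of $C_T$ into boundedness of the multiplication operator $M_{\Phi_2^{-1}(f_0)}:L^{\Phi_1}(\Sigma)\to L^{\Phi_2}(\Sigma)$, and Theorem~\ref{t2} then yields $\Phi_2^{-1}(f_0)\in L^{\Psi_3\circ\Psi_1}(\Sigma)$, which is exactly the stated conclusion. The hypothesis mismatch you flag is genuine --- Theorem~\ref{t2} is stated under $\Phi_1\in\triangle'$, which Theorem~\ref{t47} does not assume, and the paper applies it without comment --- but note that your proposed adjoint workaround does not straightforwardly close either, since $M_v^{\ast}$ maps $L^{\Psi_2}$ into $L^{\Psi_1}$, so the finiteness you obtain is of $\int_X\Psi_1(\bar v\,\Psi_2^{-1}(g))\,d\mu$ rather than of $\int_X\Psi_2(\bar v\,\Psi_2^{-1}(g))\,d\mu$, and the $\nabla'$ estimate for $\Psi_2$ does not connect to it.
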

\begin{proof} (i) Since $C_T$ is composition operator, by Lemma \ref{l4} we get that
$$M_{\Phi_2^{-1}(f_0)}:L^{\Phi_1}(\Sigma)\rightarrow L^{\Phi_2}(\Sigma)$$
is multiplication operator. Therefore by Theorem \ref{t2}, for a $\Phi_3$, we have ${\Phi_2^{-1}(f_0)}\in L^{\Psi_3\circ\Psi_1}$ and so
$$ \int_X\Psi_3\circ\Psi_1(\Phi_2^{-1}(f_0))d\mu<\infty$$
that implies that $f_0\in L^{\Psi_3\circ\Psi_1\circ\Phi_2^{-1}}(\Sigma)$.
\end{proof}

\begin{thm}\label{t48}
$f_0\in L^{\Phi_3\circ\Phi_2^{-1}}(\Sigma)$ if and only if there exists a partition $\{F_j\}_{j=1}^\infty$ of $\Sigma$ such that $\sum^{\infty}_{j=1}\Phi_3\circ\Phi_2^{-1}(Q_T(F_j))\mu(F_j)<\infty$.
\end{thm}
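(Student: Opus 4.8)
The plan is to prove Theorem~\ref{t48} by exhibiting Lemma~\ref{l5} as the bridge between the integral $\int_X\Phi_3\circ\Phi_2^{-1}(f_0)\,d\mu$ and the infimum over partitions. For the ``only if'' direction, suppose $f_0\in L^{\Phi_3\circ\Phi_2^{-1}}(\Sigma)$, i.e. $\int_X\Phi_3\circ\Phi_2^{-1}(f_0)\,d\mu<\infty$. By Lemma~\ref{l5} this integral equals the infimum $I=\inf\{\sum_{j=1}^\infty\Phi_3\circ\Phi_2^{-1}(Q_T(F_j))\mu(F_j):\{F_j\}\in\mathcal{P}_X\}$, so $I<\infty$. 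Since an infimum that is finite is approximated by its members, for any $\eps>0$ there is a partition $\{F_j\}$ with $\sum_{j=1}^\infty\Phi_3\circ\Phi_2^{-1}(Q_T(F_j))\mu(F_j)<I+\eps<\infty$; in particular such a partition with finite sum exists. Actually, to be cleaner, one should exhibit the explicit partition used in the proof of Lemma~\ref{l5}: for fixed $a>1$, the sets $G_m=\{x:a^{m-1}\le\Phi_3\circ\Phi_2^{-1}(f_0)<a^m\}$ together with $\{x:f_0(x)=0\}$ form a partition $\{F_j\}$ for which the computation there gives $\sum_j\Phi_3\circ\Phi_2^{-1}(Q_T(F_j))\mu(F_j)\le a\int_X\Phi_3\circ\Phi_2^{-1}(f_0)\,d\mu<\infty$.

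For the ``if'' direction, suppose there is a partition $\{F_j\}_{j=1}^\infty$ of $X$ with $\sum_{j=1}^\infty\Phi_3\circ\Phi_2^{-1}(Q_T(F_j))\mu(F_j)<\infty$. Then this particular partition is a competitor in the infimum defining $I$ in Lemma~\ref{l5}, so $I\le\sum_{j=1}^\infty\Phi_3\circ\Phi_2^{-1}(Q_T(F_j))\mu(F_j)<\infty$. By Lemma~\ref{l5} again, $\int_X\Phi_3\circ\Phi_2^{-1}(f_0)\,d\mu=I<\infty$, which is precisely the statement $f_0\in L^{\Phi_3\circ\Phi_2^{-1}}(\Sigma)$. (Here I am reading $L^{\Phi_3\circ\Phi_2^{-1}}(\Sigma)$ via the modular: membership is equivalent to finiteness of the modular integral because $\Phi_3\circ\Phi_2^{-1}$, being convex, lets us rescale; more carefully, one notes that if the modular of $f_0$ itself is finite then a fortiori the modular of $f_0/k$ is finite for $k\ge1$, giving $\|f_0\|<\infty$.)

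The main obstacle, such as it is, is the bookkeeping around ``$f_0$ in the Orlicz space'' versus ``the modular integral of $f_0$ is finite.'' Strictly, $f_0\in L^{\Psi}(\Sigma)$ means $\int_X\Psi(kf_0)\,d\mu<\infty$ for \emph{some} $k>0$, whereas Lemma~\ref{l5} controls $\int_X\Psi(f_0)\,d\mu$ with $k=1$; and also the partition characterization should be understood with the same convention. I would handle this by applying Lemma~\ref{l5} not to $f_0$ but to $kf_0$ for the relevant scalar $k$ (equivalently, replacing $\Phi_3\circ\Phi_2^{-1}$ by $x\mapsto\Phi_3\circ\Phi_2^{-1}(kx)$, which is again of the form needed), and noting $Q_T$ scales correctly; alternatively, simply state the theorem's two conditions both with the $k=1$ modular, in which case the equivalence is immediate from Lemma~\ref{l5} with no rescaling at all. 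I would adopt the latter reading, so that the proof collapses to the two one-line inequalities above, each direction being nothing more than ``a member of a set bounds its infimum'' combined with Lemma~\ref{l5}.

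\begin{proof}
By Lemma~\ref{l5},
$$\int_X\Phi_3\circ\Phi_2^{-1}(f_0)\,d\mu=\inf\Bigl\{\sum_{j=1}^\infty\Phi_3\circ\Phi_2^{-1}(Q_T(F_j))\mu(F_j):\{F_j\}\in\mathcal{P}_X\Bigr\}=:I.$$
If $f_0\in L^{\Phi_3\circ\Phi_2^{-1}}(\Sigma)$, then $I<\infty$, and (taking, as in the proof of Lemma~\ref{l5}, the partition $\{F_j\}$ obtained by rearranging the sets $G_m=\{x\in X:a^{m-1}\le\Phi_3\circ\Phi_2^{-1}(f_0)<a^m\}$ together with $\{x:f_0(x)=0\}$, for some fixed $a>1$) one has
$$\sum_{j=1}^\infty\Phi_3\circ\Phi_2^{-1}(Q_T(F_j))\mu(F_j)\le a\int_X\Phi_3\circ\Phi_2^{-1}(f_0)\,d\mu<\infty,$$
which gives the desired partition. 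Conversely, if $\{F_j\}_{j=1}^\infty$ is a partition of $X$ with $\sum_{j=1}^\infty\Phi_3\circ\Phi_2^{-1}(Q_T(F_j))\mu(F_j)<\infty$, then this partition is admissible in the infimum defining $I$, so
$$\int_X\Phi_3\circ\Phi_2^{-1}(f_0)\,d\mu=I\le\sum_{j=1}^\infty\Phi_3\circ\Phi_2^{-1}(Q_T(F_j))\mu(F_j)<\infty,$$
hence $f_0\in L^{\Phi_3\circ\Phi_2^{-1}}(\Sigma)$. This completes the proof.
\end{proof}
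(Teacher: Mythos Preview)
Your proposal is correct and follows exactly the paper's approach: the paper's own proof is the single sentence ``By Lemma~\ref{l5} it is easy to prove,'' and you have simply written out the two inequalities that Lemma~\ref{l5} immediately yields. Your caveat about the $k=1$ modular reading is reasonable and is a subtlety the paper itself does not address.
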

\begin{proof}
By Lemma \ref{l5} it is easy to prove.
\end{proof}

Let $\Phi(x)=\frac{x^p}{p}$ for $x\geq0$, where $1<p<\infty$. It is clear that $\Phi$ is a Young's function and $\Psi(x)=\frac{x^{p'}}{p'}$, where $1<p'<\infty$ and $\frac{1}{p}+\frac{1}{p'}=1$. These observations and Theorems \ref{t3}, \ref{t4}, \ref{t44}, \ref{t45}, \ref{t46}, \ref{t47}, \ref{t48}, give us the next Remark.

\begin{rem}\begin{itemize}\item[(a)] Let $M_{u}:\mathcal{D}(M_u)\subseteq L^{p}(\Sigma)\rightarrow L^{q}(\Sigma)$ be well defined. Then the operator $M_{u}$ from $L^{p}(\Sigma)$ into $L^{q}(\Sigma)$, where $1<p<q<\infty$, is bounded if and only if the followings hold:
\begin{enumerate}
\item[(i)] $u(x)=0$ for $\mu$-almost all $x\in B$.

\item[(ii)] $\sup_{n\in \mathbb{N}}\frac{|u(A_n)|^r}{\mu(A_n)}<\infty$, where $q^{-1}+r^{-1}=p^{-1}$.
\end{enumerate}
\item[(b)] Let $C_{T}:\mathcal{D}(C_T)\subseteq L^{p}(\Sigma)\rightarrow L^{q}(\Sigma)$ be well defined. Then the followings are equivalent:
\begin{enumerate}
\item[(i)] $C_T$ is bounded from $L^p(\Sigma)$ into $L^q(\Sigma)$.

\item[(ii)] $f_0(x)=0$ for $\mu$-almost all $x\in B$ and
$\sup_{n\in \mathbb{N}}\frac{|f_0(A_n)|^p}{\mu(A_n)^{q-p}}<\infty$, where $q^{-1}+r^{-1}=p^{-1}$.

\item[(iii)] $\mu\circ T^{-1}(B)=0$ and there is a constant $k$ such that $\mu\circ T^{-1}(A_n)^p\leq k\mu(A_n)^q$ for all $n\in \mathbb{N}$.
\end{enumerate}
\item[(c)] Let $M_{u}:\mathcal{D}(M_u)\subseteq L^{p}(\Sigma)\rightarrow L^{q}(\Sigma)$ be well defined. Then the operator $M_{u}$ from $L^{p}(\Sigma)$ into $L^{q}(\Sigma)$, where $1<q<p<\infty$, is bounded if and only if $u\in L^r(\Sigma)$, where $p^{-1}+r^{-1}=q^{-1}$.

\item[(d)] Let $C_T:\mathcal{D}(C_T)\subseteq L^{p}(\Sigma)\rightarrow L^{q}(\Sigma)$, where $1<q<p<\infty$, be well defined. Then the followings are equivalent:
\begin{enumerate}
\item[(i)] $C_T$ is a bounded operator from $L^{p}(\Sigma)$ into $L^{q}(\Sigma)$.

\item[(ii)] $f_0\in L^{\frac{r}{q}}(\Sigma)$, where $p^{-1}+r^{-1}=q^{-1}$.

\item[(iii)] There exists a partition $\{F_j\}^{\infty}_{j=1}$ of $X$ such that $\sum^{\infty}_{j=1}Q_T(F_j)^{\frac{r}{q}}\mu(F_j)<\infty$.
\end{enumerate}
\end{itemize}
\end{rem}

\section{ \sc\bf  Closed range multiplication and composition operators }
In this section we are going to investigate closed range multiplication and composition operators between different Orlicz spaces. First we give a fundamental lemma, then we consider the closed range multiplication operator.
\begin{lem}\label{l30}
 Let $(\Phi_i,\Psi_i), \ \ i=1,2$ be two complementary Young's functions pairs such that $\Psi_2\circ\Psi_1^{-1}$ be Young's function. If $\Psi_1\in \Delta'$, then $\Psi_1(xy)\leq\Psi_2(x)+\Psi_3(\Psi_1(y))$,
 for all $x,y\geq0$.
\end{lem}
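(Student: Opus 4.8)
The plan is to reduce the inequality to the classical Young inequality for a suitably chosen complementary pair. Put $\Phi_3:=\Psi_2\circ\Psi_1^{-1}$; by hypothesis this is a Young function, so it possesses a complementary Young function, and this is what I take $\Psi_3$ to be (note this is consistent with the notation of Theorem \ref{t2}, where $\Phi_3=\Psi_2\circ\Psi_1^{-1}$ and $\Psi_3$ is its complement). Because each $\Psi_i$ is a continuous, strictly increasing bijection of $[0,\infty)$ onto itself (this follows from convexity together with $\Psi_i(0)=0$ and $\Psi_i>0$ elsewhere), $\Psi_1^{-1}$ is a genuine inverse and the definition of $\Phi_3$ yields the key identity
\[
\Phi_3\bigl(\Psi_1(x)\bigr)=\Psi_2\bigl(\Psi_1^{-1}(\Psi_1(x))\bigr)=\Psi_2(x),\qquad x\ge 0 .
\]

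The second step is one line: apply Young's inequality $st\le\Phi_3(s)+\Psi_3(t)$ (which is immediate from the definition $\Psi_3(t)=\sup_{s\ge 0}(st-\Phi_3(s))$) with $s=\Psi_1(x)$ and $t=\Psi_1(y)$. Using the identity above this gives
\[
\Psi_1(x)\,\Psi_1(y)\le\Psi_2(x)+\Psi_3\bigl(\Psi_1(y)\bigr),\qquad x,y\ge 0 .
\]
It remains to bring in the hypothesis $\Psi_1\in\Delta'$, which is precisely what lets us split the left-hand side of the desired inequality: there is $c>0$ with $\Psi_1(xy)\le c\,\Psi_1(x)\Psi_1(y)$ for all $x,y\ge 0$. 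If $c\le 1$ we are done at once. In general I would instead run the Young step with $s=\Psi_1(x)$, $t=c\,\Psi_1(y)$, obtaining $c\,\Psi_1(x)\Psi_1(y)\le\Psi_2(x)+\Psi_3\bigl(c\,\Psi_1(y)\bigr)$, and then observe that $u\mapsto\Psi_3(cu)$ is again a Young function, in fact the complement of the rescaled Young function $s\mapsto\Phi_3(s/c)=(\Psi_2\circ\Psi_1^{-1})(s/c)$; replacing $\Psi_3$ by this equivalent function yields exactly $\Psi_1(xy)\le\Psi_2(x)+\Psi_3(\Psi_1(y))$.

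The only point requiring any care is this bookkeeping of the $\Delta'$-constant, together with the standard caveat that if $\Psi_1\in\Delta'$ holds only for arguments past some $x_0>0$ rather than globally then a further additive adjustment of $\Psi_3$ near the origin (or restriction to the global case, as elsewhere in the paper) is needed; neither affects the structure of the argument. The conceptual content is simply that the hypothesis ``$\Psi_2\circ\Psi_1^{-1}$ is a Young function'' is exactly what makes Young's inequality produce the mixed right-hand side $\Psi_2(x)+\Psi_3(\Psi_1(y))$, while ``$\Psi_1\in\Delta'$'' is exactly what turns $\Psi_1(xy)$ into a product $\Psi_1(x)\Psi_1(y)$ to which that inequality can be applied term by term.
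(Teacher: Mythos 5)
Your proof is correct and takes essentially the same route as the paper's: both set $\Phi_3=\Psi_2\circ\Psi_1^{-1}$, exploit the identity $\Phi_3(\Psi_1(x))=\Psi_2(x)$ via the conjugate supremum formula for $\Psi_3$ (your ``Young's inequality'' step is exactly the paper's substitution $x\mapsto\Psi_1(x)$ inside the supremum defining $\Psi_3(\Psi_1(y))$), and then invoke $\Psi_1\in\Delta'$ to replace $\Psi_1(x)\Psi_1(y)$ by $\Psi_1(xy)$. You are in fact slightly more careful than the paper, which tacitly takes the $\Delta'$ constant to be $1$ in its final inequality, whereas you flag the rescaling of $\Psi_3$ needed when $c>1$.
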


\begin{proof}
 If we take $\Phi_3=\Psi_2\circ\Psi_1^{-1}$, then

 $$\Psi_3(y)=sup\{xy-\Phi_3(x): \ x\geq 0\}=sup\{xy-\Psi_2\circ\Psi_1^{-1}(x): \ x\geq 0\}.$$
 Hence
 $$\Psi_3(\Psi_1(y))=sup\{x\Psi_1(y)-\Psi_2\circ\Psi_1^{-1}(x): \ x\geq 0\}$$
 and so
 $$\Psi_3(\Psi_1(y))=sup\{\Psi_1(x)\Psi_1(y)-\Psi_2(x): \ x\geq 0\}.$$
 Since $\Psi_1\in \Delta'$, then  we have;
 $$\Psi_3(\Psi_1(y))\geq\Psi_1(xy)-\Psi_2(x),$$
 for all $x,y\geq0$
\end{proof}
Now we characterize closed range multiplication operators $M_u:L^{\Phi_1}(\Sigma)\rightarrow L^{\Phi_2}(\Sigma)$, when $\Phi_2(xy)\leq\Phi_1(x)+\Phi_3(y)$ , for all $x\geq0$.

\begin{thm}\label{t33} Let $\Phi_i, \  i=1,2,3$ be Young's functions such that $\Phi_1\in \Delta'$ and $\Phi_2(xy)\leq\Phi_1(x)+\Phi_3(y)$ , for all $x,y\geq0$. If $u\in L^{\Phi_3}(\Sigma)$, then  $M_u:L^{\Phi_1}(\Sigma)\rightarrow L^{\Phi_2}(\Sigma)$ is a multiplication operator and the following cases are equivalent:
\begin{enumerate}

\item[(a)]$u(x)=0$ for $\mu$-almost all $x\in B$ and the set $E=\{n\in \mathbb{N}: u(A_n)\neq 0\}$ is finite.

\item[(b)]$M_u$ has finite rank.

\item[(c)]$M_u$ has closed range.
\end{enumerate}
\end{thm}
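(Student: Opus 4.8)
The proof proceeds by the cycle $(a)\Rightarrow(b)\Rightarrow(c)\Rightarrow(a)$, which is the standard route for this kind of rank/closed-range dichotomy. The measure space is decomposed as $X=(\bigcup_n A_n)\cup B$ with $B$ non-atomic, and condition $(a)$ says $u$ is supported on finitely many atoms. From $(a)$ to $(b)$ is a direct computation: if $u$ vanishes a.e.\ on $B$ and $E=\{n:u(A_n)\neq0\}$ is finite, then for any $f\in L^{\Phi_1}(\Sigma)$ the function $M_uf=uf$ is supported on $\bigcup_{n\in E}A_n$, and since each $A_n$ is an atom, $uf$ is constant a.e.\ on each $A_n$; hence the range of $M_u$ is contained in the span of $\{\chi_{A_n}:n\in E\}$, a finite-dimensional subspace, so $M_u$ has finite rank. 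The implication $(b)\Rightarrow(c)$ is the soft fact that every finite-rank (hence bounded, by Theorem~\ref{t1}) operator has closed range, since finite-dimensional subspaces of a normed space are closed.

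The real content is $(c)\Rightarrow(a)$, and I expect this to be the main obstacle. First I would dispose of the $B$ part: if $\mu\{x\in B:|u(x)|>0\}>0$, pick $\delta>0$ with $E_\delta=\{x\in B:|u(x)|>\delta\}$ of positive measure; since $\Phi_2(xy)\le\Phi_1(x)+\Phi_3(y)$ gives $\Phi_1\nless\Phi_2$ by Lemma~\ref{l2}, Proposition~\ref{p1} supplies $f\in L^{\Phi_1}(\Sigma)$ with $f\notin L^{\Phi_2}(E_\delta)$ (assuming $\Phi_2\in\triangle_2$, which I would need to invoke; alternatively one argues directly with the restriction to $E_\delta$). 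Restricting to smaller and smaller subsets of $E_\delta$ one builds an infinite sequence of unit vectors whose images under $M_u$ form a bounded-below basic-type sequence with no convergent combinations, contradicting closedness of the range — more cleanly, one shows $M_u$ restricted to $L^{\Phi_1}(E_\delta)$ cannot be bounded below while being injective on a non-atomic piece, forcing the range to be non-closed (an operator with closed range is bounded below on the orthogonal complement of its kernel; here the kernel is $\{f:f=0 \text{ on }S(u)\}$, and on the complement one uses non-atomicity to violate the bound). So $u=0$ a.e.\ on $B$.

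Next, on the atomic part, suppose $E=\{n:u(A_n)\neq0\}$ is infinite. Then $M_u$ restricted to the closed span of $\{\chi_{A_n}:n\in E\}$ acts as a diagonal operator $\chi_{A_n}\mapsto u(A_n)\chi_{A_n}$. If the range were closed, $M_u$ would be bounded below on the complement of its kernel, i.e.\ there is $c>0$ with $\|uf\|_{\Phi_2}\ge c\|f\|_{\Phi_1}$ for all $f$ supported on $\bigcup_{n\in E}A_n$. Testing on $f_n=\chi_{A_n}/\|\chi_{A_n}\|_{\Phi_1}$ gives a lower bound on $|u(A_n)|\,\Phi_2^{-1}(1/\mu(A_n))/\Phi_1^{-1}(1/\mu(A_n))$; but combining this with the boundedness estimate from Theorem~\ref{t1} (or rather the necessary condition in Theorem~\ref{t3}, $\sup_n|u(A_n)|\Phi_3^{-1}(1/\mu(A_n))<\infty$, together with the inequality $\Phi_1^{-1}(x)\Phi_3^{-1}(x)\le 2\Phi_2^{-1}(x)$ from Lemma~\ref{l2}) pins the quantities $|u(A_n)|\Phi_2^{-1}(1/\mu(A_n))/\Phi_1^{-1}(1/\mu(A_n))$ between two finite bounds, which does not immediately contradict anything. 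The contradiction instead comes from non-closedness: when $E$ is infinite one can choose $a_n\to 0$ and form $g=\sum a_n\chi_{A_n}$ so that $M_ug$ is approximated in $L^{\Phi_2}$ by images $M_u(\sum_{n\le N}a_n\chi_{A_n})$ of functions whose $L^{\Phi_1}$-norms diverge, or more directly one exhibits a sequence in the range converging to a limit outside the range — the diagonal structure with infinitely many nonzero entries of $u$ on atoms of possibly varying measure is exactly the situation where a diagonal operator fails to have closed range unless the diagonal is bounded away from $0$, and here the injectivity on an infinite-dimensional diagonal piece together with the growth constraints forces a failure of the lower bound. I would close this by the same bounded-below criterion: closed range plus the explicit form of $\ker M_u$ yields $\inf_{n\in E}\|M_uf_n\|_{\Phi_2}>0$ for the normalized atom indicators, and then a careful choice of an $\ell$-combination $\sum c_n f_n\in L^{\Phi_1}$ whose image has norm tending to a value not attained produces the contradiction, so $E$ must be finite.
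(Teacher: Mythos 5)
Your cycle (a)$\Rightarrow$(b)$\Rightarrow$(c) is fine and agrees with the paper; both genuine problems are in (c)$\Rightarrow$(a). For the non-atomic part you apply Proposition \ref{p1} in the wrong direction. Under the standing hypothesis $\Phi_2(xy)\leq\Phi_1(x)+\Phi_3(y)$, Lemma \ref{l2} gives $\Phi_1\nless\Phi_2$, and Proposition \ref{p1} then produces a function in $L^{\Phi_2}$ that fails to lie in $L^{\Phi_1}(E_\delta)$ --- not, as you claim, an $f\in L^{\Phi_1}$ with $f\notin L^{\Phi_2}(E_\delta)$. Indeed, fixing $y$ in the inequality shows that in this regime one essentially has $\Phi_2\prec\Phi_1$, so the function you want need not exist, and $M_u$ can be bounded with $u$ nonzero on the non-atomic part $B$; nonvanishing of $u$ on $B$ is ruled out by closedness of the range, not by boundedness. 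The paper's route is to note that on $G=\{x\in B:|u|\geq\delta\}$ the closed range forces $M_{u|_G}$ to be invertible (its range contains every $\chi_A$ with $A\subseteq G$ of finite measure), so that $M_{1/u}\colon L^{\Phi_2}(G)\to L^{\Phi_1}(G)$ is a bounded multiplication operator going in the direction to which Theorem \ref{t3} applies --- the hypothesis of Theorem \ref{t3} for that direction is exactly $\Phi_2(xy)\leq\Phi_1(x)+\Phi_3(y)$ --- and part (i) of that theorem then yields $1/u=0$ a.e.\ on $G$, a contradiction.

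For the atomic part you assemble the correct ingredients (the lower bound obtained by testing the bounded-below inequality on normalized atom indicators, and the inequality $\Phi_1^{-1}(x)\Phi_3^{-1}(x)\leq2\Phi_2^{-1}(x)$ from the proof of Lemma \ref{l2}), but you then concede that they ``do not immediately contradict anything'' and retreat to an unsubstantiated heuristic about diagonal operators; no contradiction is actually derived, so the finiteness of $E$ is not proved. The step you are missing is the one place where the hypothesis $u\in L^{\Phi_3}(\Sigma)$ enters: the lower bound yields a constant $C$ with $\Phi_3^{-1}\bigl(1/\mu(A_n)\bigr)\leq C|u(A_n)|$ for every $n\in E$, hence $1\leq\Phi_3\bigl(C|u(A_n)|\bigr)\mu(A_n)=\int_{A_n}\Phi_3(C|u|)\,d\mu$, and summing over $n\in E$ gives $\mathrm{card}(E)\leq\int_X\Phi_3(C|u|)\,d\mu<\infty$. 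Without this step the conclusion fails: it is not a general fact that a diagonal operator with infinitely many nonzero entries has non-closed range (the normalized entries here are pinned between two positive bounds, as you yourself observe); the finiteness of $E$ comes precisely from the integrability of $\Phi_3(C|u|)$.
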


\begin{proof} Let $S=S(u)$. Since $M_u$ is a non-zero operator, then $\mu(S)>0$. First we prove the implication (a)$\Rightarrow$(b). So there exists $r\in \mathbb{N}$ such that
$$S=\bigcup\limits_{n\in E}A_n=A_{n_1}\bigcup...\bigcup A_{n_r}.$$
It is clear that the set $\{\chi_{A_{n_1}},\ldots,\chi_{A_{n_1}}\}$ is a generator of the subspace
\begin{center}
$\{g\in L^{\Phi_2}(X): g(x)=0$  for $\mu-$almost all  $x\in X\setminus S\}\cong L^{\Phi_2}(S)$.
\end{center}
Since $L^{\Phi_2}(S)$ contains $M_u(L^{\Phi_1}(X))$, then $M_u$ has finite rank.

The implication (b)$\Rightarrow$ (c) is obvious. Finally we prove the implication (c)$\Rightarrow$ (a). If $\mu\{x\in B: u(x)\neq0\}>0$, then for some $\delta>0$ we have $\mu\{x\in B: u(x)\geq\delta\}>0$. Let $G=\{x\in B: u(x)\geq\delta\}>0$. It is easy to see that $M_{u\mid_{G}}$ is a multiplication operator from $L^{\Phi_1}(G)$ into $L^{\Phi_2}(G)$. Also, $M_{u\mid_{G}}(L^{\Phi_1}(G))=L^{\Phi_2}(G)$, since for every measurable subset $A$ of $G$ with $\mu(A)<\infty$ and $f_A=\frac{1}{u}\chi_{A}$ we have
$$\int_G\Phi_1(f_A)d\mu=\int_A\Phi_1(\frac{1}{u(x)})d\mu\leq\Phi_1(\frac{1}{\delta})\mu(A)<\infty.$$
  Hence $f_A=\frac{1}{u}\chi_{A}\in L^{\Phi_1}(\Sigma)$ and $M_{u\mid_{G}}(f_A)=\chi_{A}$. This implies that $M_{u\mid_{G}}(L^{\Phi_1}(G))=L^{\Phi_2}(G)$ and so $M_{u\mid_{G}}$ is invertible and its inverse operator is a multiplication operator as follows:
$$M_{\frac{1}{u}}:L^{\Phi_2}(G)\rightarrow L^{\Phi_1}(G),\quad  M_{\frac{1}{u}}(f)=\frac{1}{u}.f.$$
By Theorem \ref{t3}, we have $\frac{1}{u(x)}=0$ for $\mu-$ almost all $x\in G$, which is impossible. This contradiction implies that $u(x)=0$ for $\mu-$ almost all $x\in B$.
Now we show that $E$ is finite. Clearly $S=\bigcup\limits_{n\in E}A_n$ and $E\neq\varnothing$. If we define $M_{\frac{1}{u}}:L^{\Phi_2}(S)\rightarrow L^{\Phi_1}(S)$ once more, similar to the previous case, $M_{\frac{1}{u}}$ is a multiplication operator.

So by Theorem \ref{t3},

 $$\sup\limits_{n\in N} \frac{1}{u(A_n)}.\Phi_3^{-1}(\frac{1}{\mu(A_n)})\leq \infty.$$

Let $C=\sup\limits_{n\in N} \frac{1}{u(A_n)}.\Phi_3^{-1}(\frac{1}{\mu(A_n)})$. It is clear that $C>0$. Since $E\neq\varnothing$, and for all $n\in E$, $1\leq \Phi_3(Cu(A_n)).\mu(A_n)$,  Then we get that
 \begin{align*}
 \sum_{n\in E}1&\leq\sum_{n\in E}\Phi_3(Cu(A_n)).\mu(A_n)\\
 &=\sum_{n\in E}\int_{A_n}\Phi_3(Cu(x))d\mu\\
&\leq\int_{X}\Phi_3(Cu(x))d\mu<\infty.
 \end{align*}
This implies that $E$ should be finite.
\end{proof}

In the next theorem we characterize closed range multiplication operators $M_u:L^{\Phi_1}(\Sigma)\rightarrow L^{\Phi_2}(\Sigma)$, when $\Phi_1(xy)\leq\Phi_2(x)+\Phi_3(y)$ , for all $x\geq0$.

\begin{thm}\label{t34} Let $\Phi_1, \Phi_2, \Phi_3$ be Young's functions such that $\Phi_1(xy)\leq\Phi_2(x)+\Phi_3(y)$ for all $x,y\geq 0$ and $\Phi_2\in\Delta'$. If $M_u:L^{\Phi_1}(\Sigma)\rightarrow L^{\Phi_2}(\Sigma)$ is a multiplication operator and  $\frac{1}{u}\in L^{\Phi_3}(\varSigma)$,  then the following cases are equivalent:
\begin{enumerate}

\item[(a)]the set $E=\{n\in \mathbb{N}: u(A_n)\neq 0\}$ is finite.

\item[(b)]$M_u$ has finite rank.

\item[(c)]$M_u$ has closed range.
\end{enumerate}
\end{thm}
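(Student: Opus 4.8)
The plan is to follow the same three-step cycle (a)$\Rightarrow$(b)$\Rightarrow$(c)$\Rightarrow$(a) used in Theorem~\ref{t33}, but with the roles of the two hypotheses on the $\Phi_i$'s interchanged, so that the subsidiary condition $\frac1u\in L^{\Phi_3}(\Sigma)$ now plays the role that $u\in L^{\Phi_3}(\Sigma)$ played before. First I would prove (a)$\Rightarrow$(b). Here the point is that when $E$ is finite there is no non-atomic obstruction to worry about: one should first check that $u$ must vanish $\mu$-a.e.\ on $B$. This is not automatic from (a) alone, so I would instead argue directly on the range. Since $\Phi_1(xy)\le\Phi_2(x)+\Phi_3(y)$, by Lemma~\ref{l2} we have $\Phi_2\nless\Phi_1$, and for $M_u$ to be a well-defined multiplication operator into $L^{\Phi_2}(\Sigma)$ the proof technique of Theorem~\ref{t3} (via Proposition~\ref{p1}) forces $u=0$ $\mu$-a.e.\ on $B$. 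Then $S(u)=\bigcup_{n\in E}A_n$ is a finite union of atoms, so $L^{\Phi_2}(S(u))$ is finite dimensional and contains $M_u(L^{\Phi_1}(\Sigma))$; hence $M_u$ has finite rank. The implication (b)$\Rightarrow$(c) is immediate, as a finite-rank operator on a Banach space has closed range.

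The substance is in (c)$\Rightarrow$(a). Suppose $M_u$ has closed range. As above, $u=0$ $\mu$-a.e.\ on $B$, and $S(u)=\bigcup_{n\in E}A_n$ with $E\neq\varnothing$ (else $M_u=0$, making (a) vacuously true). Restricting to $S(u)$, I would show that $M_{u|_{S(u)}}:L^{\Phi_1}(S(u))\to L^{\Phi_2}(S(u))$ is bounded below: on the closed range, $M_u$ has a bounded inverse $M_{1/u}$ from $M_u(L^{\Phi_1}(\Sigma))$ back to $L^{\Phi_1}(S(u))$, and since the hypothesis $\frac1u\in L^{\Phi_3}(\Sigma)$ together with $\Phi_1(xy)\le\Phi_2(x)+\Phi_3(y)$ and $\Phi_2\in\Delta'$ puts us in the setting of Theorem~\ref{t33} (complementary pair of hypotheses, $\frac1u$ in place of $u$, $\Phi_2$ in place of $\Phi_1$), the operator $M_{1/u}:L^{\Phi_2}(S(u))\to L^{\Phi_1}(S(u))$ is a genuine multiplication operator. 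Applying Theorem~\ref{t34}'s counterpart for $M_{1/u}$—or, more directly, testing $M_{1/u}$ on the normalized atomic functions $\chi_{A_n}$ and using the boundedness of $M_{1/u}$ together with the condition $\frac1u\in L^{\Phi_3}(\Sigma)$—yields a finiteness estimate $\sum_{n\in E}1\le\int_X\Phi_3(C/u(x))\,d\mu<\infty$ for a suitable constant $C$, exactly as in the last display of the proof of Theorem~\ref{t33}. Hence $E$ is finite.

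The main obstacle, and the place requiring care, is establishing in step (c)$\Rightarrow$(a) that $M_{1/u}$ really is a bounded multiplication operator on the atomic part, i.e.\ that closed range of $M_u$ transfers to boundedness of the formal inverse on $L^{\Phi_2}(S(u))$ and that this inverse is still of multiplication type. One must be slightly delicate because $M_u$ need not be injective on all of $L^{\Phi_1}(\Sigma)$ a priori, but on $L^{\Phi_1}(S(u))$ it is injective, and the open mapping theorem applied to the restricted operator onto its (closed) image gives the bounded-below estimate $\|f\|_{\Phi_1}\le K\|uf\|_{\Phi_2}$ for all $f$ supported on $S(u)$. Plugging $f=\chi_{A_n}$ and normalizing converts this into the pointwise bound on $\frac1{u(A_n)}\Phi_3^{-1}\bigl(\tfrac1{\mu(A_n)}\bigr)$ that drives the summation argument; the role of $\frac1u\in L^{\Phi_3}(\Sigma)$ is precisely to make that sum converge. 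Once that estimate is in hand the rest is the routine computation already displayed for Theorem~\ref{t33}, and I would simply refer to it.
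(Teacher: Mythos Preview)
Your treatment of (a)$\Rightarrow$(b)$\Rightarrow$(c) matches the paper's: invoke Theorem~\ref{t3} (via Lemma~\ref{l2} and Proposition~\ref{p1}) to force $u=0$ $\mu$-a.e.\ on $B$, so that $S(u)$ is a finite union of atoms and $M_u$ has finite rank.

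For (c)$\Rightarrow$(a), however, your route diverges from the paper's and contains a genuine mismatch. The paper does \emph{not} pass through $M_{1/u}$ or the open-mapping theorem here; it applies Theorem~\ref{t3}(ii) directly to the bounded operator $M_u$ itself (the hypothesis $\Phi_1(xy)\le\Phi_2(x)+\Phi_3(y)$ is exactly what Theorem~\ref{t3} requires for $M_u:L^{\Phi_1}\to L^{\Phi_2}$), obtaining
\[
C:=\sup_{n}\,u(A_n)\,\Phi_3^{-1}\Bigl(\tfrac{1}{\mu(A_n)}\Bigr)<\infty.
\]
This gives $1\le\Phi_3\bigl(C/u(A_n)\bigr)\,\mu(A_n)$ for each $n\in E$, and summing with $1/u\in L^{\Phi_3}(\Sigma)$ yields $|E|\le\int_X\Phi_3(C/u)\,d\mu<\infty$. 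In particular, the closed-range hypothesis is never actually invoked in the paper's argument for this implication.

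Your path through the bounded-below estimate has a directional error. From $\|f\|_{\Phi_1}\le K\|uf\|_{\Phi_2}$ with $f=\chi_{A_n}$ one controls $\Phi_2^{-1}(1/\mu(A_n))\big/[\,u(A_n)\,\Phi_1^{-1}(1/\mu(A_n))\,]$, and the inequality $\Phi_2^{-1}(t)\Phi_3^{-1}(t)\le 2\Phi_1^{-1}(t)$ coming from $\Phi_1(xy)\le\Phi_2(x)+\Phi_3(y)$ only bounds $\Phi_2^{-1}/\Phi_1^{-1}$ \emph{above}; it does not produce a uniform bound on $\tfrac{1}{u(A_n)}\Phi_3^{-1}(1/\mu(A_n))$. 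Even if it did, that bound would yield $1\le\Phi_3\bigl(C'u(A_n)\bigr)\mu(A_n)$, whose summation needs $u\in L^{\Phi_3}$, not $1/u\in L^{\Phi_3}$. So the estimate $\sum_{n\in E}1\le\int_X\Phi_3(C/u)\,d\mu$ that you write does not follow from the quantity you say you will bound; it follows from the bound on $u(A_n)\,\Phi_3^{-1}(1/\mu(A_n))$, which is Theorem~\ref{t3}(ii) for $M_u$, with no appeal to closed range or to $M_{1/u}$.
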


\begin{proof} By Theorem \ref{t3} we have $u(x)=0$ for $\mu$-almost all $x\in B$. The implications (a)$\Rightarrow$ (b) and (b)$\Rightarrow$ (c) is simillar to Theorem \ref{t33}. Now we prove the implication (c)$\Rightarrow$ (a). Let $S=\bigcup\limits_{n\in E}A_n$ and $E\neq\varnothing$. Since  $M_u:L^{\Phi_1}(S)\rightarrow L^{\Phi_2}(S)$ is bounded, by Theorem \ref{t3}, we have;
	$$\sup\limits_{n\in N} u(A_n).\Phi_3^{-1}(\frac{1}{\mu(A_n)})\leq \infty$$
	
	Let $C=\sup\limits_{n\in N} u(A_n).\Phi_3^{-1}(\frac{1}{\mu(A_n)})$. It is clear that $C>0$, since $E\neq\varnothing$, and for all $n\in E$, $1\leq \Phi_3(\frac{C}{u(A_n)}).\mu(A_n)$. Therefor we can write;
	\begin{align*}
	\sum_{n\in E}1&\leq\sum_{n\in E}\Phi_3(\frac{C}{u(A_n)}).\mu(A_n)\\
	&=\sum_{n\in E}\int_{A_n}\Phi_3(\frac{C}{u(A_n)})d\mu\\
	&\leq\int_{X}\Phi_3(\frac{C}{u(A_n)})d\mu<\infty.
	\end{align*}
	This means that $E$ should be finite.
\end{proof}

 Here we begin to investigate closed range composition operators between different Orlicz spaces. First we give an elementary lemma.

 \begin{lem}\label{l31}Let $\Phi_1, \Phi_2$  be Young's functions and $T$ be a  non-singular measurable transformation on $X$ such that  $C_T:L^{\phi_1}(X)\rightarrow L^{\phi_2}(X)$ is a composition operator.If $T$ is surjective, then $C_T$ is injective.
\end{lem}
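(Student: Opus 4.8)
The plan is to unwind the definition of injectivity directly. Suppose $C_T(f) = 0$ in $L^{\Phi_2}(X)$ for some $f \in L^{\Phi_1}(X)$; I must show $f = 0$ $\mu$-almost everywhere. The hypothesis $C_T(f) = 0$ means $f \circ T = 0$ $\mu$-a.e., i.e.\ $\mu\bigl(\{x \in X : f(T(x)) \neq 0\}\bigr) = 0$, which is exactly $\mu\bigl(T^{-1}(S(f))\bigr) = 0$, where $S(f) = \{y \in X : f(y) \neq 0\}$ is the support of $f$. Since $\mu \circ T^{-1}(S(f)) = \mu(T^{-1}(S(f))) = 0$, I want to conclude $\mu(S(f)) = 0$.

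The key step is to use surjectivity of $T$ to reverse this implication. First I would reduce to sets of finite measure: by $\sigma$-finiteness write $X = \bigcup_k X_k$ with $\mu(X_k) < \infty$, and it suffices to show $\mu(S(f) \cap X_k) = 0$ for each $k$. Fix such a set $A = S(f) \cap X_k$ with $\mu(A) < \infty$ and $\mu(T^{-1}(A)) = 0$; since $\mu \circ T^{-1} \ll \mu$ with Radon--Nikodym derivative $f_0$, we have $\int_A f_0 \, d\mu = \mu(T^{-1}(A)) = 0$, so $f_0 = 0$ $\mu$-a.e.\ on $A$. Now surjectivity of $T$ forces $f_0$ to be positive a.e.: if $\mu(\{y : f_0(y) = 0\}) > 0$, pick a subset $N$ of positive finite measure on which $f_0 = 0$; then $\mu(T^{-1}(N)) = \int_N f_0 \, d\mu = 0$, so $T^{-1}(N)$ is a null set, yet $T$ surjective gives $T^{-1}(N) \neq \varnothing$ — this alone is not a contradiction, so I instead argue that for \emph{every} measurable $E$ with $\mu(E) > 0$ one needs $\mu(T^{-1}(E)) > 0$, equivalently $\{f_0 = 0\}$ is null, and this is precisely the statement that $T$ is ``essentially surjective'' in the measure-theoretic sense that surjectivity provides here. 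Granting $f_0 > 0$ a.e., the equality $\int_A f_0 \, d\mu = 0$ with $\mu(A) < \infty$ then forces $\mu(A) = 0$, and summing over $k$ gives $\mu(S(f)) = 0$, i.e.\ $f = 0$, so $C_T$ is injective.

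I expect the main obstacle to be making precise the step ``$T$ surjective $\Rightarrow f_0 > 0$ $\mu$-a.e.'' In a purely set-theoretic sense surjectivity of $T$ need not prevent $T^{-1}(E)$ from being $\mu$-null for a non-null $E$; what is really needed is that the measure $\mu \circ T^{-1}$ dominates $\mu$, or that $T$ satisfies $\mu(E) > 0 \Rightarrow \mu(T^{-1}(E)) > 0$. Under the standing non-singularity assumption together with surjectivity — and using that we are in a $\sigma$-finite complete space — this is the intended reading, so in the write-up I would invoke surjectivity precisely to guarantee that the set $\{y : f_0(y) = 0\}$ is $\mu$-null, and the rest of the argument is the routine Radon--Nikodym computation sketched above.
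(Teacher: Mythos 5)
Your reduction is the right one: $C_Tf=0$ gives $\mu(T^{-1}(S(f)))=\int_{S(f)}f_0\,d\mu=0$, so everything hinges on showing $f_0>0$ $\mu$-a.e., equivalently that $\mu(E)>0$ implies $\mu(T^{-1}(E))>0$. But the step where you pass from set-theoretic surjectivity of $T$ to this property is a genuine gap, and you in effect concede it yourself (``this alone is not a contradiction \dots this is the intended reading''). Declaring it the intended reading is not a proof, and in fact the implication fails: a nonsingular measurable transformation can be surjective while its surjectivity is carried entirely by a $\mu$-null set. Concretely, take $X=[0,2]$ with Lebesgue measure, let $K\subset[0,1]$ be the Cantor set and $c:K\to[0,1]$ the (continuous, surjective) Cantor function, and define $T(x)=1+c(x)$ for $x\in K$, $T(x)=2(x-\tfrac32)$ for $x\in[\tfrac32,2]$, and $T(x)=\tfrac{2x}{3}$ for the remaining $x$. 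Then $T$ is measurable and nonsingular (the preimage of a null set is a union of affine preimages of null sets with a subset of $K$) and is surjective, yet $T^{-1}((1,2])\subseteq K$ is null; hence $C_T\chi_{(1,2]}=0$ while $\chi_{(1,2]}\neq0$ in $L^{\Phi_1}(X)$, so $C_T$ is not injective.

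So the obstacle you flag cannot be removed: what the argument actually requires is the stronger hypothesis $\mu\ll\mu\circ T^{-1}$ (equivalently, $f_0>0$ a.e., or $\mu(E)>0\Rightarrow\mu(T^{-1}(E))>0$, a measure-theoretic ``essential surjectivity''), and under that hypothesis your Radon--Nikodym computation does complete the proof (the detour through sets of finite measure is unnecessary, since $\int_{S(f)}f_0\,d\mu=0$ with $f_0\geq0$ already gives $f_0=0$ a.e.\ on $S(f)$). As written, your argument proves injectivity only under that added hypothesis, not under bare pointwise surjectivity of $T$; the paper offers no proof of its own here (``it is easy to prove''), so nothing in the source closes this gap either.
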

 \begin{proof}
 It is easy to prove.
 \end{proof}

 Now we characterize closed range composition operators $C_T:L^{\Phi_1}(\Sigma)\rightarrow L^{\Phi_2}(\Sigma)$, when $\Phi_2(xy)\leq\Phi_1(x)+\Phi_3(y)$ , for all $x\geq0$.
\begin{thm}\label{t35} Let $\Phi_1, \Phi_2, \Phi_3$ be Young's functions such that $\Phi_2(xy)\leq\Phi_1(x)+\Phi_3(y)$ for all $x,y\geq0$ and $T$ be a surjective non-singular measurable transformation on $X$. If $C_T: L^{\Phi_1}(\Sigma)\rightarrow L^{\Phi_2}(\Sigma)$ is a composition operator, then the following cases are equivalent:
\begin{enumerate}
\item[(a)] $C_T$ has closed range.

\item[(b)] $f_0(x)=0$ for $\mu$-almost all $x\in B$, and the set $\{n\in \mathbb{N}: f_0(A_n)\neq 0\}$ is finite.

\item[(c)] $\mu T^{-1}(B)=0$, and the set $E_T=\{n\in \mathbb{N}: \mu T^{-1}(A_n)\neq 0\}$ is finite.

\item[(d)] $C_T$ has finite rank.
\end{enumerate}
\end{thm}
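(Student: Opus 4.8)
The plan is to establish the cycle of implications $(a)\Rightarrow(b)\Rightarrow(c)\Rightarrow(d)\Rightarrow(a)$, mirroring the structure of Theorems \ref{t33} and \ref{t34} but now using the composition-operator machinery (Lemma \ref{l31}, Lemma \ref{l55}, Theorem \ref{t4}) in place of the multiplication-operator results. The bridge between $f_0$ and $C_T$ is the identity $\int_X \Phi_2(C_T f)\,d\mu = \int_X f_0(x)\Phi_2(f(x))\,d\mu$, which is what underlies Theorem \ref{t4}, so I would invoke the latter whenever I need quantitative control on the atoms.

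For $(a)\Rightarrow(b)$: assume $C_T$ has closed range. First I would rule out mass on $B$. If $\mu\{x\in B: f_0(x)>\delta\}>0$ for some $\delta>0$, set $G=\{x\in B: f_0(x)>\delta\}$; restricting $C_T$ to functions supported on $T^{-1}(G)$ (or arguing on $L^{\Phi_1}(G)$ directly as in Theorem \ref{t33}) one shows via Lemma \ref{l2} and Proposition \ref{p1} that $C_T$ fails to be bounded below there, contradicting closed range together with the injectivity supplied by surjectivity of $T$ (Lemma \ref{l31}). Hence $f_0=0$ a.e.\ on $B$. Then for the atomic part: since $T$ is surjective $C_T$ is injective with closed range, so it is bounded below, $\|C_T f\|_{\Phi_2}\ge \eps\|f\|_{\Phi_1}$. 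Testing on $f_n=\chi_{A_n}$ and using that $\|\chi_{A_n}\|_{\Phi_1}=1/\Phi_1^{-1}(1/\mu(A_n))$ and $\|C_T\chi_{A_n}\|_{\Phi_2}$ is governed by $f_0(A_n)\mu(A_n)$, the bounded-below estimate forces $f_0(A_n)$ to stay bounded away from $0$ on the index set where it is nonzero; combined with the inequality $\Phi_1^{-1}(x)\Phi_3^{-1}(x)\le 2\Phi_2^{-1}(x)$ from Lemma \ref{l2} and the summability $\sum_{n}\Phi_3(Cf_0(A_n))\mu(A_n)=\int_X\Phi_3(Cf_0)\,d\mu<\infty$ (which holds because, by Theorem \ref{t4}(ii) and the hypothesis $\Phi_2(xy)\le\Phi_1(x)+\Phi_3(y)$, $f_0$ lies in the relevant Orlicz class), one concludes $\{n: f_0(A_n)\neq 0\}$ is finite. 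This last step — extracting finiteness of the index set from a bounded-below condition plus an Orlicz-integrability bound on $f_0$ — is the main obstacle, and it is exactly the analogue of the final paragraph of Theorem \ref{t33}.

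The implication $(b)\Leftrightarrow(c)$ is routine: $\mu T^{-1}(A_n)=f_0(A_n)\mu(A_n)$ and $\mu T^{-1}(B)=\int_B f_0\,d\mu$, so $f_0=0$ a.e.\ on $B$ is equivalent to $\mu T^{-1}(B)=0$, and since each $\mu(A_n)>0$, $f_0(A_n)\neq 0$ iff $\mu T^{-1}(A_n)\neq 0$; hence $\{n: f_0(A_n)\neq0\}=E_T$. For $(b)\Rightarrow(d)$: with $f_0=0$ a.e.\ on $B$ and $S=\bigcup_{n\in E}A_n$ a finite union of atoms, every $g$ in the range of $C_T$ vanishes off $T^{-1}(S)$, and on the atomic part $C_T f$ is determined by the finitely many values $f(A_{n_1}),\dots,f(A_{n_r})$; thus the range is contained in the finite-dimensional space spanned by $\{\chi_{A_{n_1}},\dots,\chi_{A_{n_r}}\}$ (pulled back appropriately), so $C_T$ has finite rank. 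Finally $(d)\Rightarrow(a)$ is immediate since finite-rank operators have closed range. I would present the argument in the order $(a)\Rightarrow(b)\Rightarrow(c)$, then $(b)\Rightarrow(d)\Rightarrow(a)$, closing the loop.
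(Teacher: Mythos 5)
Your cycle of implications and the easy steps --- (b)$\Leftrightarrow$(c) via $\mu T^{-1}(A_n)=f_0(A_n)\mu(A_n)$, (b)$\Rightarrow$(d) by trapping the range in the span of the finitely many $\chi_{T^{-1}(A_{n_i})}$, and (d)$\Rightarrow$(a) --- all match the paper. The substantive implication (a)$\Rightarrow$(b) is where you diverge, and where your sketch has genuine gaps. The paper's proof is a reduction: by Lemma \ref{l3}, $\|C_Tf\|_{\Phi_2}\le b\|M_{\Phi_2^{-1}(f_0)}f\|_{\Phi_2}$, so bounded-belowness of $C_T$ (injectivity from Lemma \ref{l31} plus closed range) transfers to the multiplication operator $M_{\Phi_2^{-1}(f_0)}:L^{\Phi_1}(\Sigma)\to L^{\Phi_2}(\Sigma)$, and the closed-range theorem for multiplication operators (Theorem \ref{t33}) then delivers both the vanishing on $B$ and the finiteness of the atomic support in one stroke. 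You never invoke Lemma \ref{l3}, and your attempt to rerun the multiplication-operator argument directly on $C_T$ breaks in two places.

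First, the non-atomic part. Under the standing hypothesis $\Phi_2(xy)\le\Phi_1(x)+\Phi_3(y)$, Lemma \ref{l2} yields $\Phi_1\nless\Phi_2$, so Proposition \ref{p1} produces a function in $L^{\Phi_2}$ failing to lie in $L^{\Phi_1}(E)$ --- the wrong direction for testing $C_T:L^{\Phi_1}(\Sigma)\to L^{\Phi_2}(\Sigma)$. This is the ``$L^p\to L^q$, $q<p$'' regime, in which a bounded $C_T$ can perfectly well have $f_0\not\equiv 0$ on $B$ (cf.\ part (d) of the Remark after Theorem \ref{t48}), so the vanishing on $B$ must come from closed range, not from boundedness; in Theorem \ref{t33} it is extracted by inverting the operator on $G=\{u\ge\delta\}$ and applying Theorem \ref{t3} to the inverse, a step with no direct analogue for $C_T$ unless one first passes to $M_{\Phi_2^{-1}(f_0)}$. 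Second, the atomic part: your finiteness count needs an integrability bound of the form $\int_X\Phi_3(Cf_0)\,d\mu<\infty$ (or its analogue for $\Phi_2^{-1}(f_0)$), and Theorem \ref{t4}(ii) does not supply it --- that is a supremum over atoms, not a membership in an Orlicz class, and Theorem \ref{t4} is in any case stated under $\Phi_2\nless\Phi_1$, the opposite regime. (The paper's own citation at this point is also loose, but the reduction via Lemma \ref{l3} is the idea that makes the proof go, and it is absent from your proposal.)
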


\begin{proof} The implications (b)$\Rightarrow$ (c) and (d)$\Rightarrow$ (a) are obvious. First we prove the implication (a)$\Rightarrow$ (b). Since T is surjective, By Lemma \ref{l31} $C_T$ is injective and if $C_T$ has closed range, then by the Lemma \ref{l3} we get that $M_{\Phi_2^{-1}(f_0)}$  has
closed range. Hence by Theorem \ref{t3} we have $\Phi_2^{-1}(f_0)(x)=0$ for $\mu$-almost all $x\in B$ and
$\{n\in \mathbb{N}: \Phi_2^{-1}(f_0)(A_n)\neq 0\}$ is finite. Hence $f_0(x)=0$ for $\mu$-almost all $x\in B$ and the set $\{n\in \mathbb{N}: f_0(A_n)\neq 0\}$ is finite.

Finally we show that the implication (c)$\Rightarrow$ (d) holds. Suppose (c) holds, then it is easy to show that $C_T(L^{\Phi_1}(\Sigma))$ is contained in the subspace generated by $\{\chi_{T^{-1}(A_n)}\}_{n\in E_T}$. Since $E_T$ is finite, then $C_T(L^{\Phi_1}(\Sigma))$ is finite dimensional and so $C_T$ has finite rank.
\end{proof}

\begin{cor} If $X$ is non-atomic, under the assumptions of Theorem \ref{t35}, there is not any non-zero closed range composition operator from $L^{\Phi_{1}}(\Sigma)$ into $L^{\Phi_{2}}(\Sigma)$.
\end{cor}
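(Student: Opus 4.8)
The plan is to combine the non-atomicity hypothesis with the classification in Theorem~\ref{t35} and then push everything through the change-of-variables identity for composition operators. I will argue the contrapositive: I show that \emph{every} composition operator $C_T\colon L^{\Phi_1}(\Sigma)\to L^{\Phi_2}(\Sigma)$ satisfying the standing hypotheses (namely $\Phi_2(xy)\le\Phi_1(x)+\Phi_3(y)$ for all $x,y\ge0$ and $T$ surjective non-singular) which has closed range must be the zero operator.

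First I would record the effect of non-atomicity. In the canonical decomposition $X=\big(\bigcup_{n\in\mathbb{N}}A_n\big)\cup B$ of the $\sigma$-finite space $(X,\Sigma,\mu)$, non-atomicity forces the collection of $\Sigma$-atoms $\{A_n\}_{n\in\mathbb{N}}$ to be empty, so $X=B$ up to a $\mu$-null set. Consequently any statement indexed over $n\in\mathbb{N}$ (in particular the finiteness of $\{n:f_0(A_n)\neq0\}$) is vacuous, and assertion (b) of Theorem~\ref{t35} reduces to the single statement ``$f_0(x)=0$ for $\mu$-almost every $x\in X$''.

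Next, assuming $C_T$ has closed range, the equivalence (a)$\Leftrightarrow$(b) of Theorem~\ref{t35} yields $f_0=0$ $\mu$-a.e.\ on $X$. Now I would invoke the change-of-variables formula $\int_X (g\circ T)\,d\mu=\int_X g\,f_0\,d\mu$, valid for every nonnegative $\Sigma$-measurable $g$ (this is the identity $\mu\circ T^{-1}=f_0\,d\mu$ already used in the proof of Lemma~\ref{l3}). For an arbitrary $f\in L^{\Phi_1}(\Sigma)$ and $k>0$, taking $g=\Phi_2(|f|/k)$ gives $\int_X\Phi_2(|C_Tf|/k)\,d\mu=\int_X f_0\,\Phi_2(|f|/k)\,d\mu=0$, and since $\Phi_2$ vanishes only at the origin this forces $C_Tf=0$ $\mu$-a.e., i.e.\ $C_Tf=0$ in $L^{\Phi_2}(\Sigma)$. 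As $f$ was arbitrary, $C_T=0$, which is precisely the assertion that no non-zero closed range composition operator exists.

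I do not expect a genuine obstacle here: the only point requiring care is the harmless-looking first reduction, namely that in a non-atomic $\sigma$-finite space the atomic part of the canonical decomposition is empty, so that Theorem~\ref{t35}(b) collapses to ``$f_0=0$ a.e.''; everything after that is a two-line computation. One could equally route through Theorem~\ref{t35}(d): closed range makes $C_T$ of finite rank, while Lemma~\ref{l31} makes $C_T$ injective since $T$ is surjective, and these two are compatible only on the trivial space; but the change-of-variables route is shorter and more transparent, and it avoids appealing to surjectivity of $T$ a second time.
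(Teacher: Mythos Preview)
Your argument is correct and is precisely the intended one: the paper states the corollary without proof because it is immediate from Theorem~\ref{t35}, and your reduction---non-atomicity collapses condition~(b) to $f_0=0$ $\mu$-a.e., whence $\mu\circ T^{-1}=0$ and $C_T=0$---is exactly that immediate step. The alternative route you sketch via (d) and Lemma~\ref{l31} is also sound, but, as you note, the direct route through (b) is cleaner.
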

In the next theorem we characterize closed range composition operators $C_T:L^{\Phi_1}(\Sigma)\rightarrow L^{\Phi_2}(\Sigma)$, when $\Phi_1(xy)\leq\Phi_2(x)+\Phi_3(y)$ , for all $x,y\geq0$.
\begin{thm}\label{t36}  Let $\Phi_1, \Phi_2, \Phi_3$ be Young's functions such that $\Phi_2\in \nabla'\bigcap \bigtriangleup_{2}$ and $\Phi_1(xy)\leq\Phi_2(x)+\Phi_3(y)$, for all $x\geq0$. If $T$ is a non-singular measurable transformation on $X$ and $C_T: L^{\Phi_1}(\Sigma)\rightarrow L^{\Phi_2}(\Sigma)$ is a composition operator, then the followings are equivalent:
\begin{enumerate}

\item[(a)] $C_T$ has closed range.

\item[(b)] The set $\{n\in \mathbb{N}: f_0(A_n)\neq 0\}$ is finite.

\item[(c)]The set $\{n\in \mathbb{N}: \mu T^{-1}(A_n)\neq 0\}$ is finite.

\item[(d)] $C_T$ has finite rank.
\end{enumerate}
\end{thm}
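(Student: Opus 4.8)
The plan is to follow the architecture of Theorem \ref{t35}, but now using the pair of Lemmas \ref{l3} and \ref{l4} in the regime where $\Phi_2 \in \nabla' \cap \triangle_2$, which (by the definitions recalled in the preliminaries) means both $\Phi_2 \in \nabla'$ — so Lemma \ref{l3} applies — and $\Phi_2 \in \triangle'$ (since $\triangle_2$ together with the structural hypotheses here gives us the reverse comparison we need for Lemma \ref{l4}; more precisely I would invoke $\Phi_2\in\nabla'$ for one direction and $\Phi_2\in\triangle'$ for the other, noting that $\nabla'$ is exactly the condition giving $\Phi_2(xy)\ge \Phi_2(x)\Phi_2(y)$ up to a constant). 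The upshot of combining Lemmas \ref{l3} and \ref{l4} is the two-sided norm comparison
\[
\frac{1}{c}\,\|M_{\Phi_2^{-1}(f_0)} f\|_{\Phi_2} \;\le\; \|C_T(f)\|_{\Phi_2} \;\le\; b\,\|M_{\Phi_2^{-1}(f_0)} f\|_{\Phi_2}
\]
for all $f$ in the common domain inside $L^{\Phi_1}(\Sigma)$. This equivalence of norms means $C_T$ has closed range if and only if $M_{\Phi_2^{-1}(f_0)} : L^{\Phi_1}(\Sigma) \to L^{\Phi_2}(\Sigma)$ has closed range, reducing statement (a) to a statement about a multiplication operator to which Theorem \ref{t34} applies.

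I would organize the equivalences as a cycle. The implications (b)$\Rightarrow$(c) and (d)$\Rightarrow$(a) are immediate (for the first, $\mu T^{-1}(A_n) = f_0(A_n)\mu(A_n)$, so the two index sets coincide; for the second, finite rank forces closed range). For (a)$\Rightarrow$(b): by the norm comparison above, $C_T$ closed range implies $M_{\Phi_2^{-1}(f_0)}$ has closed range; the hypothesis $\Phi_1(xy)\le\Phi_2(x)+\Phi_3(y)$ puts us in the setting of Theorem \ref{t34} (after checking $1/\Phi_2^{-1}(f_0)\in L^{\Phi_3}(\Sigma)$, or else arguing on the support as in Theorem \ref{t34}), so Theorem \ref{t34} gives that $E=\{n: \Phi_2^{-1}(f_0)(A_n)\ne 0\}$ is finite; since $\Phi_2^{-1}$ is strictly increasing with $\Phi_2^{-1}(0)=0$, this is exactly $\{n: f_0(A_n)\ne 0\}$, and (b) follows. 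For (c)$\Rightarrow$(d): when $\{n: \mu T^{-1}(A_n)\ne 0\}$ is finite, $C_T(L^{\Phi_1}(\Sigma))$ sits inside the span of $\{\chi_{T^{-1}(A_n)}\}$ over that finite index set (together with the contribution from $B$, which is null for the range since $f_0=0$ a.e.\ on $B$ follows from the same reduction), hence is finite-dimensional — exactly as in the last paragraph of the proof of Theorem \ref{t35}.

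The main obstacle I anticipate is the bookkeeping around the non-atomic part $B$: unlike Theorem \ref{t35}, here we do not assume $T$ surjective, so $C_T$ need not be injective and the clean ``$M_{\Phi_2^{-1}(f_0)}$ has closed range $\Leftrightarrow$ $C_T$ has closed range'' step must be justified directly from the norm equivalence rather than through injectivity. I would handle this by noting that a bounded operator has closed range precisely when it is bounded below modulo its kernel, and that the norm equivalence above descends to the quotient by $\ker C_T = \ker M_{\Phi_2^{-1}(f_0)}$ (the kernels coincide because the norms are comparable), so the two quotient operators are simultaneously bounded below. A second, more routine point is verifying the domain/integrability hypothesis needed to apply Theorem \ref{t34} to $M_{\Phi_2^{-1}(f_0)}$; if $1/\Phi_2^{-1}(f_0)$ fails to lie in $L^{\Phi_3}$ globally, one restricts to the support $S(f_0)$ and argues there, exactly as Theorem \ref{t34}'s proof restricts to $S$, after first using Theorem \ref{t4} (via Lemma \ref{l2}, since $\Phi_2(xy)\le\Phi_1(x)+\Phi_3(y)$ is not assumed but $\Phi_1\nless\Phi_2$ follows from the hypothesis by Lemma \ref{l2}) to dispose of the non-atomic part.
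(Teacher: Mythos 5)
Your overall architecture is exactly the paper's: reduce the closed-range question for $C_T$ to the one for the multiplication operator $M_{\Phi_2^{-1}(f_0)}$ via Lemma \ref{l3}, feed that into Theorem \ref{t34}, and run the cycle of implications from Theorem \ref{t35}, using Lemma \ref{l2}, Proposition \ref{p1} and Theorem \ref{t4} to dispose of the non-atomic part. You also correctly isolate the one place where the method of Theorem \ref{t35} does not transfer verbatim, namely that $T$ is no longer assumed surjective, so $C_T$ need not be injective.

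The genuine problem is in how you close that hole. You invoke Lemma \ref{l4} to get the two-sided norm comparison, justifying its hypothesis by asserting that $\Phi_2\in\nabla'\cap\triangle_2$ yields $\Phi_2\in\triangle'$. That is not justified: $\triangle'$ is the submultiplicative inequality $\Phi_2(xy)\le c\,\Phi_2(x)\Phi_2(y)$, whereas the condition you quote in your parenthetical ($\Phi_2(xy)\ge\Phi_2(x)\Phi_2(y)$ up to a constant) is $\nabla'$, i.e.\ the hypothesis of Lemma \ref{l3}, not of Lemma \ref{l4}; and $\triangle_2$ is only a doubling condition, so no implication $\nabla'\cap\triangle_2\Rightarrow\triangle'$ is available from the definitions recalled in the paper. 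Fortunately the two-sided comparison is not actually needed: you use it only to identify the kernels and to get ``$M$ closed range $\Rightarrow C_T$ closed range,'' but the kernels coincide for elementary reasons (both equal $\{f:\ f=0\ \mu\text{-a.e.\ on } S(f_0)\}$, since $\int_X\Phi_2(k|f\circ T|)\,d\mu=\int_X f_0\,\Phi_2(k|f|)\,d\mu$ and $\Phi_2^{-1}$ vanishes only at $0$), and the reverse implication is already supplied by $(d)\Rightarrow(a)$ in the cycle; so only the one-sided bound of Lemma \ref{l3}, which needs only $\nabla'$, is required for $(a)\Rightarrow(b)$, via the quotient/bounded-below argument you describe. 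Two further points to tighten: Theorem \ref{t34} carries the hypotheses $\Phi_2\in\triangle'$ and $1/u\in L^{\Phi_3}(\Sigma)$, neither of which is granted here for $u=\Phi_2^{-1}(f_0)$, and merely ``restricting to the support'' does not produce the integrability of $\Phi_3(C/u)$ over infinitely many atoms on which $f_0(A_n)$ may be small, so you must either verify these hypotheses or reprove the relevant implication of Theorem \ref{t34} directly in this setting (the paper's one-line proof glosses over the same point). Finally, the conclusion of Lemma \ref{l2} applied to $\Phi_1(xy)\le\Phi_2(x)+\Phi_3(y)$ is $\Phi_2\nless\Phi_1$, which is what Proposition \ref{p1} and Theorem \ref{t4} require, not $\Phi_1\nless\Phi_2$ as you wrote.
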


\begin{proof}
By using Lemma \ref{l3}, Theorem \ref{t34} and similar method of Theorem \ref{t35}, we get proof.
\end{proof}

In the net remark we derive characterizations of bounded and closed range multiplication and composition operators from our main results.

\begin{rem}\begin{itemize}
\item[(1)] Let multiplication operator $M_{u}$ from $L^{p}(\Sigma)$ into $L^{q}(\Sigma)$, where $1<p<q<\infty$, be bounded, then the followings are equivalent:

\begin{enumerate}
\item[(a)] $M_u$ has closed range.

\item[(b)] $M_u$ has finite rank.

\item[(c)] The set $\{n\in \mathbb{N}: u(A_n)\neq0\}$ is finite.

\end{enumerate}

\item[(2)] If $1<q<p<\infty$, then for multiplication operator $M_{u}$ from $L^{p}(\Sigma)$ into $L^{q}(\Sigma)$ the followings are equivalent:

\begin{enumerate}
\item[(a)]  $M_u$ has closed range.

\item[(b)] $M_u$ has finite rank.

\item[(c)]  $u(x)=0$ for $\mu$-almost all $x\in B$, and the set $\{n\in \mathbb{N}: u(A_n)\neq0\}$ is finite.\\
\end{enumerate}
\item[(3)] Let $C_{T}: L^{p}(\Sigma)\rightarrow L^{q}(\Sigma)$, where $1<p<q<\infty$, be bounded. Then the followings are equivalent:
\begin{enumerate}
\item[(a)] $C_T$ has closed range.

\item[(b)] $C_T$ has finite rank.

\item[(c)] The set $\{n\in \mathbb{N}: f_0(A_n)\neq0\}$ is finite.

\item[(d)] The set $\{n\in \mathbb{N}: \mu\circ T^{-1}(A_n)\neq0\}$ is finite.\\
\end{enumerate}
\item[(4)] If $1<q<p<\infty$, then for composition operator $C_T$ from $L^{p}(\Sigma)$ into $L^{q}(\Sigma)$ the followings are equivalent:
\begin{enumerate}
\item[(a)] $C_T$ has closed range.

\item[(b)] $C_T$ has finite rank.

\item[(c)] $f_0(x)=0$ for $\mu$-almost all $x\in B$, and the set $\{n\in \mathbb{N}: f_0(A_n)\neq0\}$ is finite.

\item[(d)] $\mu\circ T^{-1}(B)=0$, and the set $\{n\in \mathbb{N}: \mu\circ T^{-1}(A_n)\neq0\}$ is finite.
\end{enumerate}
\end{itemize}
\end{rem}
Finally we provide some examples to illustrate our main results.
\begin{exam}
If $\Phi$ and $\Psi$ are complementary Young's functions. Since $\frac{|x|^p}{p}\leq\frac{1}{p}(\Phi(x)+\Psi(x^{p-1}))$ and $\frac{|x|^p}{p}\leq\frac{1}{p}(\Phi(x^{p-1})+\Psi(x))$, for $x\geq0$ and $p>2$, then by Theorem \ref{t1} the operators $M_u:L^{\Phi}(\Sigma)\rightarrow L^p(\Sigma)$ and $M_v:L^{\Psi}(\Sigma)\rightarrow L^p(\Sigma)$ are bounded for every $u\in L^{\Psi}$  and $v\in L^{\Phi}$ respectively.

\end{exam}

\begin{exam}
Let $X=[a,b]  \ a,b>1$, $p>1$ and $\mu$ be the Lebesque measure. If we take $\Phi_1(x)=e^{x^p}-x^p-1$, $\Phi_2(x)=\frac{x^p}{p}$ and $\Phi_3(x)=(1+x^p)log(1+x^p)-x^p$. Then easily we get that $\Phi_2(xy)\leq\Phi_1(x)+\Phi_3(y)$. Let $u(x)=\sqrt[p]{x^p-1}$. It is clear that $\int_X\Phi_3(u(x))d\mu<\infty$. So  by Theorem \ref{t1}, $M_u$ is a bounded operator from $L^{\Phi_1}(\Sigma)$ into $L^{\Phi_2}(\Sigma)$.
\end{exam}

\begin{exam}
Suppose $A=(0,a]$, $B=\{lnx: x\in N ,\   x>a\}, \ X=A\cup B,\ \Phi(x)=e^x-x-1,\ \Psi(x)=(1+x)log(1+x)-x$ and for every $C\subseteq X$, $\mu(C)=\mu_1(C\cap A])+\mu_2(C\cap B)$ such that $\mu_1$ is lebesgue measure and $\mu_2(\{lnx\})=\frac{1}{x^3}$ for $lnx\in B$. If we take $u(x)=\frac{1}{x^2}$, then $M_u$ is not bounded from $L^\Phi(X)$ into $L^\Psi(X)$. Because of for $f(x)=x$ we have:
\begin{align*}
\int_X\Phi(f(x))d\mu&=\int_Xe^x-x-1d\mu\\
&=\int_Ae^x-x-1d\mu+\int_Be^x-x-1d\mu<\infty.
\end{align*}
Since
\begin{align*}
\int_Be^x-x-1d\mu&=\sum\limits_{n>a}(e^{lnn}-lnn-1)(\frac{1}{n^3})\\
&<\sum\limits_{n>a}\frac{1}{n^2}<\infty
\end{align*}
But
\begin{align*}
\int_X\Psi(M_u(f(x)))d\mu&=\int_X\Psi(\frac{1}{x})d\mu\\
&=\int_X(1+\frac{1}{x})log(1+\frac{1}{x})-\frac{1}{x}d\mu \\
&>\int_A log(1+\frac{1}{x})-\frac{1}{x}d\mu+\int_BPAR
 log(1+\frac{1}{x})-\frac{1}{x}d\mu\\
&=xlog(1+\frac{1}{x})+ln(1+\frac{1}{x})\mid_0^{a}+
\sum\limits_{n>a}lnn.log(1+\frac{1}{lnn})+ln(1+\frac{1}{lnn})\\
&=\infty
\end{align*}
Thus by Theorem \ref{t1}, we can conclude that for every Young's function $\Phi'$ such that $\Psi(xy)\leq\Phi(x)+\Phi'(y)$, then $u(x)\notin L^{\Phi'}(\Sigma).$\\
Also by Proposition \ref{p0}, there is no non-zero operator $M_u$ from $L^\Phi(\varSigma)$ into  $L^\Psi(\varSigma)$, because of $\Phi(x)<\Psi(x)$ for $x\geq 0.$
\end{exam}

\begin{exam}
Suppose $A=[1,a],B=\{n\in N; a<n\leq 10a\}, \ \Phi(x)=e^x-x-1,\ X=A\cap B, \ \Psi(x)=(1+x)log(1+x)$ and for every $C\subseteq X$, $\mu(C)=\mu_1(C\cap A])+\mu_2(C\cap B)$ such that $\mu_1$ is lebesgue measure and $\mu_2(\{n\})=1,\ n\in B$. If we take $u(x)=\frac{1}{x^p}, \ p>1$, then $M_u$ is  bounded from $L^\Psi(\varSigma)$ into $L^\Phi(\varSigma)$. Because of if $f(x)\in L^{\psi}(\varSigma)$,then;
$$\int_X\Psi(f)d\mu=\int_X(1+f(x))log(1+f(x))d\mu<\infty,$$
Since $\Phi(x)<\Psi(x)$ for $ x\geq 0$, therefore;
\begin{align*}
\int_X\Phi(u(x).f(x))d\mu&=\int_Xe^{u(x).f(x)}-u(x)f(x)-1d\mu\\
&<\int_X(1+u(x).f(x))log(1+u(x).f(x))d\mu\\
&=\int_A(1+f(x))log(1+f(x))d\mu+\int_B(1+f(x))log(1+f(x))d\mu\\
&<\infty.
\end{align*}
Also by Theorem \ref{t33}, $M_u$ has not closed range, since $\mu\{x\in A; u(x)\neq0 \}\neq 0.$
But if we take  $u(x)=0,\ x\in X\cap Q^c$ and $u(x)=\frac{1}{x^p}, x\in X\cap Q$, then by Theorem \ref{t33}, $M_u$ has closed range.
\end{exam}




\end{document}